\documentclass[a4paper,11pt]{amsart}
\setlength{\textwidth}{17cm}
\setlength{\oddsidemargin}{-5mm}
\setlength{\evensidemargin}{-5mm}
\setlength{\textheight}{25cm}
\setlength{\topmargin}{-12mm}

\usepackage{amsfonts,amssymb,amsthm,cite,amsmath,amstext}
\usepackage{color}
\usepackage[colorlinks,linkcolor=black,citecolor=black]{hyperref}
\usepackage{comment}
\usepackage{framed}

%\usepackage{fancyhdr}
%\pagestyle{fancy}
%\fancyhf{}

%\lhead[\thepage]{}
%\chead[\textsc{Positive cones of convex valuations}]{\textsc{B.~Dang $\&$ J.~Xiao}}
%\rhead[]{\thepage}
%\renewcommand{\headrulewidth}{0pt}

\footskip=50pt
\definecolor{shadecolor}{gray}{0.875}

\setcounter{tocdepth}{1}

\newtheorem{thrm}{Theorem}[section]
\newtheorem{thrmx}{Theorem}

\newtheorem{corx}{Corollary}

\newtheorem{lem}[thrm]{Lemma}

\newtheorem{prop}[thrm]{Proposition}

\theoremstyle{definition}
\newtheorem{defn}[thrm]{Definition}

\newtheorem{rmk}[thrm]{Remark}

\title{Mixed Hodge-Riemann bilinear relations and m-positivity}
\author{Jian Xiao}
\date{}

%\thanks{}

\begin{document}
\maketitle

\begin{abstract}
Motivated by our previous work on Hodge-index type inequalities, we give a form of mixed Hodge-Riemann bilinear relation by using the notion of $m$-positivity, whose proof is an adaptation of the works of Timorin and Dinh-Nguy\^{e}n. This mixed Hodge-Riemann bilinear relation holds with respect to mixed polarizations in which some satisfy particular positivity condition, but could be degenerate along some directions. In particular, it applies to fibrations of compact K\"ahler manifolds.
\end{abstract}

\tableofcontents

\section{Introduction}
\subsection{The classical and mixed HRR}
Let $X$ be a compact K\"ahler manifold of dimension $n$, and let $\omega$ be a K\"ahler class on $X$. Let $0\leq p,q \leq p+q \leq n$ be integers. Denote $\Omega= \omega^{n-p-q}$, which is a strictly positive class in $H^{n-p-q, n-p-q}(X, \mathbb{C})$. Associated to $\Omega$, one could define the following quadratic form $Q$ on $H^{p,q}(X, \mathbb{C})$:
\begin{equation*}
  Q(\Phi, \Psi) = i^{q-p} (-1)^{(p+q)(p+q+1)/2} \Omega \cdot \Phi \cdot \overline{\Psi}.
\end{equation*}
The classical Hodge-Riemann bilinear relation theorem (HRR) (see e.g. \cite{Dem_AGbook}, \cite{voisinHodge1}) states that $Q$ is positive definite on the primitive subspace $P^{p,q}(X, \mathbb{C})$, which is defined as follows:
\begin{equation*}
  P^{p,q}(X, \mathbb{C}) = \{\Phi \in H^{p,q} (X, \mathbb{C})| \Omega \cdot \omega \cdot \Phi =\omega^{n-p-q+1}\cdot \Phi=0\}.
\end{equation*}
As a corollary, one could get the classical Hard Lefchetz theorem (HL), i.e., the linear map
\begin{align*}
  \Omega: &H^{p,q}(X, \mathbb{C})\rightarrow H^{n-q,n-p}(X, \mathbb{C}),\\
   &\Phi \mapsto \Omega \cdot \Phi
\end{align*}
is an isomorphism.
Furthermore, by the classical HRR, one could also get the Lefchetz decomposition theorem (LD), that is, there is an orthogonal decomposition
\begin{equation*}
  H^{p, q}(X, \mathbb{C}) = P^{p,q}(X, \mathbb{C}) \oplus \omega \wedge H^{p-1, q-1}(X, \mathbb{C})
\end{equation*}
with respect to $Q$, with the convention that $H^{p-1, q-1}(X, \mathbb{C})=\{0\}$ if either $p=0$ or $q=0$.

In \cite{DN06,cattanimixedHRR} (see also \cite{gromov1990convex,timorinMixedHRR}), the classical HRR is greatly generalized to the mixed situation. More precisely, let $\omega_1, ...,\omega_{n-p-q+1}$ be K\"ahler classes on $X$. Denote $\Omega=\omega_1 \cdot ...\cdot \omega_{n-p-q}$. We call $\Phi \in H^{p,q} (X, \mathbb{C})$ primitive with respect to $\Omega \cdot \omega_{n-p-q+1}$, if $$\Omega\cdot \omega_{n-p-q+1} \cdot \Phi =0.$$ The mixed HRR states that the corresponding quadratic form $Q$ defined by $\Omega$ is positive definite on the subspace of primitive classes. This mixed HRR then implies the mixed versions of HL and LD.

The reader can find some applications of this mixed HRR in complex geometry in \cite{DinhS04aut, dinhS05greencurrent}. One can also find some related topics and applications in the other contexts in \cite{williamsonHodgeSeorgel}, \cite{williamson2016hodge}, \cite{huhHRR}, \cite{mcmullenSimplePolytopes}, \cite{timorinPolytopeHRR}, \cite{cataldoMigHLsemismallmap} and the references therein.

\subsection{The main result}
Our aim is to weaken the positivity of $(1,1)$ classes, which we also call polarizations, in the definition of $\Omega$ such that the HRR still holds. The key positivity notion is $m$-positivity.

Let $\omega$ be a reference K\"ahler metric on $X$ of dimension $n$. Then a real smooth $(1,1)$ form $\widehat{\alpha}$ is called $m$-positive $(1\leq m \leq n)$ with respect to $\omega$, if
$$\widehat{\alpha}^k \wedge \omega^{n-k}>0$$
holds for any $1\leq k \leq m$ and any point on $X$. It is well-known that $\widehat{\alpha}$ is $n$-positive if and only if it is strictly positive.
This kind of positivity plays an important role in the study of Hessian equations, as it gives the natural solution set of such PDEs.
A cohomology class $\alpha\in H^{1,1}(X, \mathbb{R})$ is called $m$-positive with respect to the metric $\omega$, if it has a smooth representative which is $m$-positive in the pointwise sense.

In our previous work \cite{xiao18Hodgeindex}, inspired by the Hodge-index type inequalities obtained by complex Hessian equations, we proved a Hodge-index type theorem for classes of type $(1,1)$ by using $m$-positivity and Garding's theory of hyperbolic polynomials. More precisely, let $\alpha_1,...,\alpha_{m-1}$ be $m$-positive classes, and denote $$\Omega = \omega^{n-m}\cdot \alpha_1 \cdot...\cdot \alpha_{m-2},$$
then the HRR holds with respect to $\Omega$, where the primitive subspace is defined by $\Omega\cdot \alpha_{m-1}$. This is the main motivation of our work.

We generalize this result to arbitrary $(p,q)$ classes, by assuming further that the $\alpha_j$ are semipositive. Note that: if an $(1,1)$ form is semipositive, then it is $m$-positive with respect to $\omega$ if and only if it has at least $m$ positive eigenvalues.
Thus our positivity assumption is that every $\alpha_j$ has a semipositive smooth representative, which has at least $m$ positive eigenvalues at every point of $X$.

\begin{thrmx}\label{thrm main result}
Let $X$ be a compact K\"ahler manifold of dimension $n$, and let $\omega$ be a K\"ahler class on $X$. Let $p, q, m$ be integers such that $0\leq p,q \leq p+q \leq m \leq n$.
Let $\alpha_1, ...,\alpha_{m-p-q+1} \in H^{1,1}(X, \mathbb{R})$. Assume that every $\alpha_j$ has a smooth representative which is semipositive and has at least $m$ positive eigenvalues at every point. Denote $\Omega=\omega^{n-m}\cdot \alpha_1 \cdot \alpha_2 \cdot ...\cdot \alpha_{m-p-q}$.
Let
\begin{equation*}
  P^{p,q}(X, \mathbb{C})=\{\Phi \in H^{p,q} (X, \mathbb{C})| \Omega \cdot \alpha_{m-p-q+1} \cdot \Phi =0\}
\end{equation*}
be the primitive subspace defined by $\Omega \cdot \alpha_{m-p-q+1}$.
Then
\begin{itemize}
\item (HRR) the quadratic form
\begin{equation*}
  Q(\Phi, \Psi) = i^{q-p} (-1)^{(p+q)(p+q+1)/2} \Omega \cdot \Phi \cdot \overline{\Psi}
\end{equation*}
is positive definite on $P^{p,q}(X, \mathbb{C})$.
\end{itemize}
This implies that
\begin{itemize}
   \item (HL) the map
   \begin{equation*}
     \Omega: H^{p,q} (X, \mathbb{C}) \rightarrow H^{n-q,n-p} (X, \mathbb{C}),\ \Phi \mapsto \Omega\cdot \Phi
   \end{equation*}
   is an isomorphism.
  \item (LD) the space $H^{p,q} (X, \mathbb{C})$ has an orthogonal decomposition
  \begin{equation*}
    H^{p,q} (X, \mathbb{C}) = P^{p,q}(X, \mathbb{C}) \oplus \alpha_{m-p-q+1} \wedge H^{p-1, q-1}(X, \mathbb{C})
  \end{equation*}
  with respect to $Q$, and $\dim P^{p,q} (X, \mathbb{C}) = \dim H^{p,q} (X, \mathbb{C}) - \dim H^{p-1,q-1} (X, \mathbb{C})$, where we use the convention that $H^{p-1, q-1}(X, \mathbb{C}) =\{0\}$ if either $p=0$ or $q=0$.
\end{itemize}

\end{thrmx}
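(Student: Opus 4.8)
The plan is to follow the strategy of Timorin and Dinh--Nguy\^{e}n, reducing the mixed, possibly degenerate, cohomological statement to a pointwise statement in linear algebra and then deforming to the classical case. First I would set up the \emph{linear algebra model}: fix a point $x\in X$ and work on the complex vector space $V = T_x X \cong \mathbb{C}^n$, with $\Lambda^{p,q}V^*$ in place of $H^{p,q}(X,\mathbb{C})$, the reference K\"ahler form $\omega$ giving a Hermitian metric, and the semipositive forms $\widehat{\alpha}_j$ (smooth representatives with at least $m$ positive eigenvalues) in place of the classes $\alpha_j$. The goal at this level is the pointwise Hodge--Riemann relation: the form $Q$ built from $\widehat\Omega = \omega^{n-m}\wedge\widehat{\alpha}_1\wedge\cdots\wedge\widehat{\alpha}_{m-p-q}$ is positive definite on the primitive subspace cut out by $\widehat\Omega\wedge\widehat{\alpha}_{m-p-q+1}$. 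Once this is known pointwise with uniform constants, the cohomological HRR follows by the standard argument: if $\Phi\in P^{p,q}(X,\mathbb{C})$ is a harmonic representative (with respect to $\omega$) and $\Omega\cdot\Phi\cdot\overline\Phi\le 0$, one integrates the pointwise inequality against the primitive projection and concludes $\Phi=0$; HL and LD are then formal consequences of the nondegeneracy of $Q$, exactly as in the classical case, using that $\dim P^{p,q} = \dim H^{p,q} - \dim H^{p-1,q-1}$ drops out of the Lefschetz-type decomposition.

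For the pointwise statement I would argue by a deformation/continuity argument in the spirit of Dinh--Nguy\^{e}n. Since each $\widehat{\alpha}_j$ is semipositive with at least $m$ positive eigenvalues, the perturbation $\widehat{\alpha}_j + t\,\omega$ is a genuine K\"ahler form for every $t>0$, so the classical mixed HRR of \cite{DN06,cattanimixedHRR} applies to $\widehat\Omega_t = \omega^{n-m}\wedge(\widehat\alpha_1+t\omega)\wedge\cdots\wedge(\widehat\alpha_{m-p-q}+t\omega)$ with primitivity defined by $\widehat\Omega_t\wedge(\widehat\alpha_{m-p-q+1}+t\omega)$: the associated $Q_t$ is positive definite on the corresponding primitive subspace $P_t$. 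The task is to pass to the limit $t\to 0^+$. The subtlety is that $P_t$ varies with $t$ and the limiting form $Q_0$ need only be positive \emph{semi}definite on $\lim P_t$ a priori; the content of the theorem is that no degeneration occurs on the primitive subspace $P_0$ defined by $\widehat\Omega\wedge\widehat{\alpha}_{m-p-q+1}$. Here I expect the crucial input to be a dimension count together with the ``at least $m$ positive eigenvalues'' hypothesis, which guarantees that the relevant Lefschetz operator $\widehat\Omega\wedge(\cdot)$ (and $\widehat\Omega\wedge\widehat{\alpha}_{m-p-q+1}\wedge(\cdot)$) has the same rank at $t=0$ as for $t>0$ — equivalently, that $\dim P_t$ is constant across $t\ge 0$ — so that $P_0 = \lim_{t\to 0}P_t$ and positivity is preserved under the limit. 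To secure the constancy of rank I would diagonalize: at each point one may assume $\omega = \frac{i}{2}\sum dz_k\wedge d\bar z_k$ and simultaneously that the $\widehat\alpha_j$ are diagonal (or, more carefully, reduce to the case where all but one are suitable combinations of $\omega$ and coordinate forms, following Timorin's reduction), and then the primitivity conditions become explicit linear conditions on the coefficients of $\Phi$ whose solution space has dimension independent of $t$.

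The main obstacle, and the step requiring the most care, is precisely this uniform rank/dimension statement: one must rule out that the degeneracy of the $\widehat\alpha_j$ along the ``missing'' directions (the at most $n-m$ possibly-zero eigenvalues) causes the primitive subspace to jump or the quadratic form to degenerate. This is where Garding's theory of hyperbolic polynomials enters, as in \cite{xiao18Hodgeindex}: the function $(\widehat\alpha_1,\dots,\widehat\alpha_{m-p-q})\mapsto \widehat\Omega\wedge\Phi\wedge\overline\Phi$ (for fixed primitive $\Phi$) should be controlled by a hyperbolic polynomial whose positivity cone contains the $m$-positive forms, and the semipositivity plus the eigenvalue count places $\widehat\alpha_j$ in the closed cone in a way that keeps the Hodge--Riemann form on the boundary of, but not outside, the positive-definite locus. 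I would organize the argument so that the K\"ahler deformation handles the generic directions and the Garding/hyperbolicity input handles the boundary behavior, then assemble the pointwise HRR, integrate to get the cohomological HRR, and deduce HL and LD formally. A secondary technical point to check is that harmonic representatives with respect to $\omega$ interact correctly with the pointwise primitivity — i.e.\ that a class primitive for $\Omega\cdot\widehat\alpha_{m-p-q+1}$ has a representative that is pointwise primitive — which is standard once the pointwise Lefschetz decomposition is available with the deformed forms and one lets $t\to0$.
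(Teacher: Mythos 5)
Your overall skeleton (pointwise linear HRR, then deformation to the classical K\"ahler case, then globalization) matches the paper's, and your deformation $\widehat{\alpha}_j+t\omega$ is essentially the paper's homotopy $\Omega_t=\omega^{n-m}\wedge((1-t)\alpha_1+t\omega)\wedge\cdots$. But there are two genuine gaps. The first, and more serious, is exactly at the step you yourself flag as the main obstacle: you never actually prove that $Q_t$ stays nondegenerate at $t=0$, i.e.\ that the Lefschetz map $\widehat{\Omega}\wedge(\cdot):\Lambda^{p,q}\to\Lambda^{n-q,n-p}$ is injective for the degenerate forms. Simultaneous diagonalization does not deliver this (the $\widehat{\alpha}_j$ need not be simultaneously diagonalizable, and even for diagonal forms the full-rank statement on $\Lambda^{p,q}$ \emph{is} Hard Lefschetz, not a routine count), and Garding's theory of hyperbolic polynomials only controls top intersection numbers of $(1,1)$ forms --- it gives the $(1,1)$ case from the author's earlier work but says nothing about general $(p,q)$ forms, which is the whole point here. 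The paper's actual mechanism is Timorin's induction on dimension: if $\widehat{\Omega}\wedge\Phi=0$, restrict to hyperplanes $H_v$; a restriction lemma shows that for $v$ outside a proper linear subspace $S(\alpha_j)$ the restrictions ${\alpha_j}_{|H_v}$ are again $m$-positive and semipositive (this is the only place semipositivity is used --- it makes the bad set a proper subspace); one chooses an orthonormal basis $e_1,\dots,e_n$ avoiding all the $S(\alpha_j)$, applies the inductive HRR on each $H_{e_j}$ to get $c\,\omega^{n-m-1}\wedge\alpha_1\wedge\cdots\wedge\alpha_{m-p-q}\wedge\Phi\wedge\overline{\Phi}\wedge i\,dH_{e_j}\wedge d\overline{H_{e_j}}\geq 0$, and sums using $i\sum_j dH_{e_j}\wedge d\overline{H_{e_j}}=\omega$; equality then forces every restriction $\Phi_{|H_{e_j}}$ to vanish, whence $\Phi=0$ because $p+q<n$. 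Without this (or an equivalent) argument your pointwise statement is unproved, and the deformation has nothing to preserve.

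The second gap is in the globalization. Integrating the pointwise inequality against an $\omega$-harmonic representative does not work for mixed polarizations: the harmonic representative of a class that is cohomologically primitive for $\Omega\cdot\alpha_{m-p-q+1}$ is not pointwise primitive for $\widehat{\Omega}\wedge\widehat{\alpha}_{m-p-q+1}$; the commutation of the Laplacian with the Lefschetz operator that makes this automatic in the classical single-$\omega$ case is unavailable here. The paper instead follows Dinh--Nguy\^{e}n: one solves the equation $\widehat{\Omega}\wedge\widehat{\alpha}_{m-p-q+1}\wedge dd^c\widehat{F}=\widehat{\Omega}\wedge\widehat{\alpha}_{m-p-q+1}\wedge\widehat{\Phi}$, whose solvability rests on the pointwise estimate $\|\Phi\|^2\leq c_1\,Q(\Phi,\Phi)+c_2\,\|\Omega\wedge\alpha_{m-p-q+1}\wedge\Phi\|^2$ coming from the linear HRR and the linear Lefschetz decomposition, so that $\widehat{\Phi}-dd^c\widehat{F}$ is pointwise primitive, and then integrates by Stokes. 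You acknowledge this issue but dismiss it as standard; it is in fact the reason the $dd^c$-method is needed at all.
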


\begin{rmk}
In general, the HRR, HL and LD theorems are not true if $\Omega$ is an arbitrary class in $H^{n-p-q,n-p-q} (X, \mathbb{C})$, even if the class has a smooth strictly positive representative (see e.g. \cite[Section 9]{berdtsibony02dbarcurrent}, \cite[Remark 2.9]{DN13HRR}). Thus Theorem \ref{thrm main result} provides a sufficient condition on $\Omega$ such that HRR, HL and LD hold true.
\end{rmk}

\subsection{Relative HRR}
Interesting examples are given by the holomorphic fibrations between compact K\"ahler manifolds.

\begin{corx}\label{cor relative HRR}
Let $f: X\rightarrow Y$ be a holomorphic submersion from a compact K\"ahler manifold of dimension $n$ to a compact K\"ahler manifold of dimension $m$. 
Let $p, q, m$ be non-negative integers such that $p+q \leq m$. Assume that $\omega_X$ is a K\"ahler class on $X$ and $\omega_{Y_1},...,\omega_{Y_{m-p-q+1}}$ are K\"ahler classes on $Y$, then the HRR holds with respect to $$\Omega_{X, Y}=\omega_X ^{n-m} \cdot f^*\omega_{Y_1}\cdot... \cdot f^* \omega_{Y_{m-p-q}},$$
where the primitive space is defined by $\Omega_{X, Y} \cdot f^* \omega_{Y_{m-p-q+1}}$.
\end{corx}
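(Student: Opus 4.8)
The plan is to deduce Corollary \ref{cor relative HRR} directly from Theorem \ref{thrm main result} by checking that the pulled-back K\"ahler classes $\alpha_j := f^*\omega_{Y_j}$ satisfy the positivity hypotheses of the theorem. First I would fix K\"ahler forms $\eta_j$ on $Y$ representing the classes $\omega_{Y_j}$, so that $f^*\eta_j$ is a smooth closed real $(1,1)$-form representing $\alpha_j$. Since $f$ is a holomorphic submersion, at every point $x\in X$ the differential $df_x\colon T_xX \to T_{f(x)}Y$ is surjective, hence its $(1,0)$-part has rank exactly $m=\dim Y$. Therefore the Hermitian form $(f^*\eta_j)_x = f(x)^*(\eta_j)_{f(x)}\circ df_x$ is the pullback of a positive definite form on an $m$-dimensional space by a surjective linear map, so it is positive semidefinite with exactly $m$ positive eigenvalues and $n-m$ zero eigenvalues at every point of $X$. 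By the remark recalled just before Theorem \ref{thrm main result}, a semipositive $(1,1)$-form with at least $m$ positive eigenvalues is precisely an $m$-positive semipositive form; thus each $\alpha_j$ has a semipositive smooth representative with at least $m$ positive eigenvalues everywhere, which is exactly the hypothesis of the theorem.

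Next I would simply match up the remaining data: the integers satisfy $0\le p,q\le p+q\le m\le n$ as required (with $m\le n$ because $f$ is a submersion between the given manifolds), the reference K\"ahler class is $\omega_X$, and the number of classes $\alpha_1,\dots,\alpha_{m-p-q+1}$ is exactly $m-p-q+1$, matching the statement of Theorem \ref{thrm main result}. The product $\Omega_{X,Y} = \omega_X^{n-m}\cdot f^*\omega_{Y_1}\cdot\cdots\cdot f^*\omega_{Y_{m-p-q}}$ coincides with the class $\Omega = \omega^{n-m}\cdot\alpha_1\cdots\alpha_{m-p-q}$ of the theorem, and the primitive subspace defined by $\Omega_{X,Y}\cdot f^*\omega_{Y_{m-p-q+1}}$ is the primitive subspace defined by $\Omega\cdot\alpha_{m-p-q+1}$. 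Hence Theorem \ref{thrm main result} applies verbatim and yields the HRR (together with the associated HL and LD statements) with respect to $\Omega_{X,Y}$.

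I do not anticipate a genuine obstacle here; the only point requiring a word of care is the eigenvalue count for $f^*\eta_j$, i.e. that submersivity of $f$ forces exactly $m$ positive eigenvalues pointwise rather than merely $\ge 1$. This is a pointwise linear-algebra statement about pulling back a positive definite Hermitian form along a surjective $\mathbb{C}$-linear map, and it is where the hypothesis that $f$ is a submersion (as opposed to an arbitrary holomorphic map) is used. Once that is in hand, the corollary is a formal specialization of the main theorem, so the proof is short.
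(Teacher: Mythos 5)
Your proposal is correct and is essentially the paper's own argument: the paper justifies the corollary in one sentence by noting that each $f^*\omega_{Y_k}$ is semipositive and $m$-positive with respect to a K\"ahler metric on $X$, so Theorem \ref{thrm main result} applies directly. Your additional detail --- that submersivity forces $f^*\eta_j$ to have exactly $m$ positive eigenvalues pointwise, which combined with semipositivity gives $m$-positivity via the remark before the theorem --- is exactly the verification the paper leaves implicit.
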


This is true because every $f^*\omega_{Y_k}$ is $m$-positive with respect to some K\"ahler metric on $X$ and semipositive. In some sense, this can be seen as a relative version of the classical and mixed HRR.

\begin{rmk}
When $f$ is just a surjective holomorphic map, the $f^*\omega_{Y_j}$ is $m$-positive at generic points, that is, it has a smooth representative which is $m$-positive on a Zariski open set. In this general setting, we are not quite sure what kind of condition could be proposed to $f$ such that the ``relative'' HRR holds if and only if the condition on $f$ holds. Note that the relative HRR or HL is not true for a general map. Nevertheless, for general holomorphic maps, see Section \ref{sec semismall} for some discussions. We intend to address this in a future work.

To our knowledge, there are only some known results (see \cite{cataldoMigHLsemismallmap}) when $f:X\rightarrow Y$ is a semi-small map between projective varieties and the $\omega_{Y_j}$ are given by the same ample line bundle on $Y$. For this particular case, $f$ is generically finite, thus $n=m$.
\end{rmk}

\subsection{About the proof}
To prove the main result, we mainly follow the approach of Timorin \cite{timorinMixedHRR} and Dinh-Nguy\^{e}n \cite{DN06, DN13HRR}. First, we apply Timorin's inductive argument to establish the linear version of our HRR. To this end, we need to study the positivity of restrictions of $m$-positive forms. This is the only place where we need to assume further that the $\alpha_j$ are semipositive, not only $m$-positive. Once the linear HRR is proved, then we could apply the $dd^c$-method to reduce the global case to the local case.

\section{Preliminaries}

\subsection{Restriction of $m$-positive forms}
In this section, we consider the problem on positivity of the restrictions of $m$-positive forms. Let $\Lambda^{1,1}_{\mathbb{R}} (\mathbb{C}^n)$ be the space of real $(1,1)$ forms on $\mathbb{C}^n$ with constant coefficients.

\begin{lem}\label{lem restc}
Let $\omega$ be a K\"ahler metric on $\mathbb{C}^n$ with constant coefficients. Assume that $\alpha \in \Lambda^{1,1}_{\mathbb{R}} (\mathbb{C}^n)$ is $m$-positive with respect to $\omega$. Then
\begin{itemize}
  \item for any $1\leq k \leq m-1$ and any hyperplane $H\subset \mathbb{C}^n$, we have $\alpha_{|H} ^k \wedge \omega_{|H} ^{n-k-1}>0$ on $H$.
  \item for $k=m$, if we assume further that $\alpha$ is semipositive, then there is a proper subspace $S(\alpha)$ such that for any $v\in \mathbb{C}^n \setminus S(\alpha)$, we have $\alpha_{|H_v} ^m \wedge \omega_{|H_v} ^{n-m-1}>0$ on $H_v$, where $$H_v = \{z\in \mathbb{C}^n| v\cdot z =0\}$$ is the hyperplane defined by $v$.
\end{itemize}

\end{lem}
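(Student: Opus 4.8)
The plan is to reduce the lemma to a purely linear-algebraic statement about elementary symmetric functions and then invoke G\aa rding's theory of hyperbolic polynomials, in the spirit of \cite{xiao18Hodgeindex}. First I would pass to a diagonal model: since $\omega$ is a positive definite Hermitian form and $\alpha$ a real $(1,1)$-form with constant coefficients, after a $\mathbb{C}$-linear change of coordinates one may assume $\omega=i\sum_{j=1}^n dz_j\wedge d\bar z_j$ and $\alpha=i\sum_{j=1}^n\lambda_j\,dz_j\wedge d\bar z_j$ with $\lambda=(\lambda_1,\dots,\lambda_n)\in\mathbb{R}^n$; this sends a hyperplane to a hyperplane and multiplies the relevant $(n,n)$- or $(n-1,n-1)$-forms by a positive constant, so it does not affect the statement. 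A direct computation gives $\alpha^k\wedge\omega^{n-k}=k!\,(n-k)!\,e_k(\lambda)\,dV$ with $dV$ the standard volume form and $e_k$ the $k$-th elementary symmetric polynomial, so ``$\alpha$ is $m$-positive with respect to $\omega$'' means precisely $e_1(\lambda),\dots,e_m(\lambda)>0$, i.e.\ $\lambda$ lies in the G\aa rding cone $\Gamma_m^{(n)}$, while ``$\alpha$ semipositive'' means $\lambda\in\mathbb{R}^n_{\geq 0}$. For a hyperplane $H=H_v$, normalised so that $\|v\|=1$, the form $\omega_{|H}$ is the standard metric on $H\cong\mathbb{C}^{n-1}$ and $\alpha_{|H}$ is the compression of $\alpha$; writing $\mu=(\mu_1,\dots,\mu_{n-1})$ for the eigenvalues of $\alpha_{|H}$ relative to $\omega_{|H}$, the same computation gives $\alpha_{|H}^k\wedge\omega_{|H}^{n-1-k}=k!\,(n-1-k)!\,e_k(\mu)\,dV_H$. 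Thus the lemma becomes: $e_k(\mu)>0$ for every $v$ when $1\le k\le m-1$, and, if $\lambda\ge 0$ and $m\le n-1$, then $e_m(\mu)>0$ for $v$ outside a proper subspace.

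Next I would establish the key identity relating $\mu$ and $\lambda$. Applying the block-determinant identity $\det(tI-A_v)=\det(tI-A)\cdot\langle(tI-A)^{-1}u,u\rangle$ to the compression $A_v$ of $A=\mathrm{diag}(\lambda)$ to $u^{\perp}$, where $u$ is the unit normal of $H_v$ (so that $|u_j|=|v_j|$), one gets
\[
  \prod_{l=1}^{n-1}(t-\mu_l)=\sum_{j=1}^{n}|v_j|^2\prod_{l\neq j}(t-\lambda_l),
\]
and comparing the coefficients of each power of $t$ yields, for all $k$,
\[
  e_k(\mu)=\sum_{j=1}^{n}|v_j|^2\,e_k\big(\lambda^{(j)}\big),\qquad \lambda^{(j)}:=(\lambda_1,\dots,\widehat{\lambda_j},\dots,\lambda_n).
\]
So $e_k(\mu)$ is a convex combination (weights $|v_j|^2$ summing to $1$) of the quantities $e_k(\lambda^{(j)})$ associated with the $n$ coordinate hyperplanes, and everything comes down to controlling the signs of $e_k(\lambda^{(j)})$.

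For $1\le k\le m-1$ I would use that $e_k(\lambda^{(j)})=\partial e_{k+1}(\lambda)/\partial\lambda_j$ together with the standard fact that the partial derivatives of $e_{r}$ are strictly positive on its G\aa rding cone; since $\Gamma_m^{(n)}\subseteq\Gamma_{k+1}^{(n)}$, this gives $e_k(\lambda^{(j)})>0$ for every $j$ (equivalently $\lambda^{(j)}\in\Gamma_{m-1}^{(n-1)}$), hence $e_k(\mu)=\sum_j|v_j|^2 e_k(\lambda^{(j)})>0$ for every hyperplane --- the first bullet, which uses only $m$-positivity. For $k=m$ I would bring in semipositivity: now all $\lambda_l\ge 0$, so each $e_m(\lambda^{(j)})\ge 0$, and $e_m(\lambda^{(j)})>0$ exactly when at least $m$ of the entries $\{\lambda_l:l\neq j\}$ are positive. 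With $\Lambda_+=\{l:\lambda_l>0\}$ we have $|\Lambda_+|\ge m$ from $e_m(\lambda)>0$; if $|\Lambda_+|\ge m+1$ then $e_m(\lambda^{(j)})>0$ for all $j$, and if $|\Lambda_+|=m$ then $e_m(\lambda^{(j)})>0$ exactly for $j\notin\Lambda_+$, which is a nonempty set since $m\le n-1$. In either case $T:=\{j:e_m(\lambda^{(j)})>0\}\neq\emptyset$, so $S(\alpha):=\{v:v_j=0\text{ for all }j\in T\}$ is a proper subspace, and whenever $v\notin S(\alpha)$ some summand of $e_m(\mu)=\sum_j|v_j|^2 e_m(\lambda^{(j)})$ is strictly positive, giving $\alpha_{|H_v}^m\wedge\omega_{|H_v}^{n-m-1}>0$; in the original coordinates $S(\alpha)$ sits inside the span of the positive eigen-directions of $\alpha$ with respect to $\omega$.

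The essential inputs are the compression-determinant identity, which converts $e_k(\mu)$ into a convex combination of the coordinate-hyperplane quantities $e_k(\lambda^{(j)})$, and the G\aa rding-cone fact that one coordinate deletion carries $\Gamma_m^{(n)}$ into $\Gamma_{m-1}^{(n-1)}$ --- this is precisely why the first bullet needs no semipositivity. The main obstacle is the case $k=m$: one does \emph{not} have $\lambda^{(j)}\in\Gamma_m^{(n-1)}$ in general, because deleting a coordinate can drop the number of positive eigenvalues below $m$ and then the sign of $e_m(\lambda^{(j)})$ is uncontrolled; this is exactly the step where semipositivity is imposed, and it is handled by the counting argument above, which also explains why there only a generic hyperplane (the complement of the proper subspace $S(\alpha)$) works.
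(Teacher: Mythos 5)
Your proof is correct, and it reaches the same key identity as the paper by a more explicit route. The paper never diagonalizes: it observes directly that $\alpha_{|H_a}^k\wedge\omega_{|H_a}^{n-k-1}$ equals (after normalization) $\alpha^k\wedge\omega^{n-k-1}\wedge i\,dH_a\wedge d\overline{H_a}$, i.e.\ the evaluation at $a$ of the Hermitian form $[\Phi_{ij}]$ attached to the $(n-1,n-1)$-form $\alpha^k\wedge\omega^{n-k-1}$; for $k\leq m-1$ it then quotes the hyperbolic-polynomial fact (from \cite[Lemma 3.8]{xiao18Hodgeindex}) that this $(n-1,n-1)$-form is strictly positive, so wedging with the nonzero semipositive form $i\,dH_a\wedge d\overline{H_a}$ is automatically positive, and for $k=m$ it uses that $[\Phi_{ij}]$ is semipositive and nonzero, so its null set is a proper subspace --- which is exactly your $S(\alpha)$. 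Your compression identity $e_k(\mu)=\sum_j|v_j|^2e_k(\lambda^{(j)})$ is precisely the diagonal incarnation of the paper's identity, since in simultaneously diagonalizing coordinates $\Phi_{ij}=\delta_{ij}\,k!\,(n-k-1)!\,e_k(\lambda^{(j)})$. What your version buys is self-containedness: the positivity of the coefficients is derived by hand via $e_k(\lambda^{(j)})=\partial e_{k+1}(\lambda)/\partial\lambda_j>0$ on the G\aa rding cone and, for $k=m$, by counting positive eigenvalues, which also makes $S(\alpha)$ completely explicit (the span of the coordinate directions $j$ with $e_m(\lambda^{(j)})=0$) and isolates transparently why semipositivity is only needed at the top degree. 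The paper's version is shorter and coordinate-free, at the cost of invoking the strict positivity of products of $m$-positive forms as a black box.
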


\begin{proof}
Let $a = (a_1,...,a_n)$ be a non-zero vector in $\mathbb{C}^n$. Let $H_a$ be the hyperplane defined by $a$. Denote the equation of $H_a$ by the same notation $H_a (z) = a \cdot z$.

Since $a\neq 0$, we could find linear functions $w_1,...,w_{n-1}$ of $z$ such that $(w_1,...,w_{n-1}, H_a)$ give a new coordinate system of $\mathbb{C}^n$. Write $\alpha = \alpha_0 + \beta_0$, where $\beta_0$ is the sum of $(1,1)$ forms containing either $dH_a$ or $d\overline{H_a}$. Then $\alpha_0 = \alpha_{|H_a}$. Similarly, we write $\omega = \omega_{|H_a} + \beta_1$. It is easy to see that
\begin{equation}\label{eq restc}
  \alpha^k \wedge \omega^{n-k-1} \wedge i dH_a \wedge d\overline{H_a} = \alpha_{|H_a} ^k \wedge \omega_{|H_a} ^{n-k-1} \wedge i dH_a \wedge d\overline{H_a}.
\end{equation}
In particular, if we denote $V=i^{n-1} dw_1 \wedge d\bar w_1 \wedge...\wedge dw_{n-1} \wedge d\bar w_{n-1}$, then (\ref{eq restc}) yields
\begin{equation}\label{eq restc1}
 \frac{ \alpha^k \wedge \omega^{n-k-1} \wedge i dH_a \wedge d\overline{H_a}}{ V \wedge i dH_a \wedge d\overline{H_a}} = \frac{\alpha_{|H_a} ^k \wedge \omega_{|H_a} ^{n-k-1}}{V}.
\end{equation}
For simplicity, we just write $(\ref{eq restc1})$ as
\begin{equation}\label{eq restc2}
  \alpha^k \wedge \omega^{n-k-1} \wedge i dH_a \wedge d\overline{H_a} = \alpha_{|H_a} ^k \wedge \omega_{|H_a} ^{n-k-1}.
\end{equation}

We first prove the first statement. For $1\leq k \leq m-1$, we can write
\begin{equation*}
  \alpha^k \wedge \omega^{n-k-1} \wedge i dH_a \wedge d\overline{H_a} =\omega^{n-m}\wedge \omega^{m-k-1}  \wedge \alpha^k \wedge  \wedge i dH_a \wedge d\overline{H_a}.
\end{equation*}
Since $\omega, \alpha$ are $m$-positive with respect to $\omega$, by the theory of hyperbolic polynomials (see e.g. \cite{gardinghyperbolic}, \cite[Chapter 2]{hormanderConvexity}, \cite[Lemma 3.8]{xiao18Hodgeindex}), $\alpha^k \wedge \omega^{n-k-1}$ is strictly positive. Thus for any non-zero semipositive $(1,1)$ form $\beta$,
\begin{equation*}
  \alpha^k \wedge \omega^{n-k-1} \wedge \beta>0.
\end{equation*}
Letting $\beta = i dH_a \wedge d\overline{H_a}$ and using (\ref{eq restc2}) finish the proof.

Next we consider the second statement.
Assume that
\begin{equation*}
 \alpha^m \wedge \omega^{n-m-1} = \sum_{i,j} \Phi_{ij} \widehat{dz_i \wedge d\bar z_j},
\end{equation*}
where $\widehat{dz_i \wedge d\bar z_j}$ is the $(n-1, n-1)$ form omitting $dz_i, d\bar z_j$ such that $$\widehat{dz_i \wedge d\bar z_j} \wedge i dz_i \wedge d\bar z_j >0.$$
After dividing a volume form, we have
\begin{equation*}
 \alpha^m \wedge \omega^{n-m-1} \wedge i dH_a \wedge d\overline{H_a} = \sum_{ij} \Phi_{ij} a_i \overline{a_j}.
\end{equation*}

As we have assumed further that $\alpha$ is semipositive, the Hermitian matrix $[\Phi_{ij}]$ is semipositive and non-zero. This implies that the set
\begin{equation*}
  S(\alpha) = \{(a_1,...,a_n)\in \mathbb{C}^n | \sum_{ij} \Phi_{ij} a_i \overline{a_j} =0\}
\end{equation*}
is a proper linear subspace of $\mathbb{C}^n$. Thus for any $v\in \mathbb{C}^n \setminus S(\alpha)$, we have $$\alpha^m \wedge \omega^{n-m-1} \wedge i dH_v \wedge d\overline{H_v}>0.$$
By (\ref{eq restc2}), this is equivalent to
\begin{equation*}
  \alpha_{|H_v} ^m \wedge \omega_{|H_v} ^{n-m-1}>0.
\end{equation*}

This finishes the proof of the second statement.
\end{proof}

\begin{rmk}
Take a coordinate system such that $\omega = i \sum_{j=1} ^n dz_j \wedge d\bar z_j$, $\alpha=i \sum_{j=1} ^n \lambda_j dz_j \wedge d\bar z_j$. If $\alpha$ is only $m$-positive with respect to $\omega$, then (\ref{eq restc2}) implies that $\alpha_{|H_e}$ is $m$-positive with respect to $\omega_{|H_e}$ for any
$$e\in \{(a_1,...,a_n)| a_i = \pm 1\}.$$
To verify this, we only need to observe that $$\alpha^k \wedge \omega^{n-k-1} \wedge i dH_e \wedge d\overline{H_e} = \alpha^k \wedge \omega^{n-k}.$$
In particular, this shows that there is an open set $\mathcal{O}_1$ (independent of $\alpha$) of hyperplanes containing all the $H_e$ such that, for any $H\in \mathcal{O}_1$, $\alpha_{|H}$ is $m$-positive with respect to $\omega_{|H}$. By induction, this yields that there is an open set $\mathcal{O}_m$ (independent of $\alpha$) of linear subspaces of dimension $m$ such that, for any $V\in \mathcal{O}_m$, $\alpha_{|V}$ is K\"ahler on $V$.
\end{rmk}

\subsection{Existence of orthogonal bases}
In order to prove the linear HRR by induction, we need the following existence result.

\begin{lem}\label{lem basis}
Let $H_{v_1},..., H_{v_k} \subset \mathbb{C}^n$ be hyperplanes, then there is an orthonormal basis $(e_1,...,e_n)$ such that every $e_i \in \mathbb{C}^n \setminus \cup_{i=1} ^k H_{v_i}$.
\end{lem}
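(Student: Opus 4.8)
The plan is to induct on $n$, choosing the first basis vector $e_1$ with a little extra care so that its orthogonal complement $e_1^{\perp}$ still meets each of the given hyperplanes \emph{properly}, and then applying the inductive hypothesis inside $e_1^{\perp}$ (with its induced Hermitian structure). The point to keep in mind is that one cannot simply pick $e_1$ off the hyperplanes and then run Gram--Schmidt blindly, because it could happen that $e_1^{\perp}$ coincides with one of the $H_{v_j}$, and then no orthonormal completion of $e_1$ could avoid that hyperplane.

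For $n=1$ every hyperplane is $\{0\}$, so any unit vector works. For the inductive step assume $n\geq 2$ and the statement known in dimension $n-1$. Recall that a vector space over an infinite field, in particular $\mathbb{C}^n$, is not a finite union of proper linear subspaces. Apply this to the finite collection consisting of the hyperplanes $H_{v_1},\dots,H_{v_k}$ together with the lines $L_j:=H_{v_j}^{\perp}$ (Hermitian orthogonal complements), all of which are proper subspaces since $n\geq 2$. This produces a nonzero vector lying in none of them, and after normalizing a unit vector $e_1$ with $e_1\notin H_{v_j}$ and $e_1\notin L_j$ for every $j$.

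The condition $e_1\notin L_j$ is exactly what guarantees $e_1^{\perp}\neq H_{v_j}$: both spaces have dimension $n-1$, and $e_1^{\perp}=H_{v_j}$ would force, after taking orthogonal complements, $\mathbb{C}e_1=L_j$, i.e. $e_1\in L_j$. Hence each $H_{v_j}\cap e_1^{\perp}$ is a proper subspace of the $(n-1)$-dimensional Hermitian space $e_1^{\perp}$; enlarging each of them to a hyperplane of $e_1^{\perp}$ and invoking the inductive hypothesis yields an orthonormal basis $(e_2,\dots,e_n)$ of $e_1^{\perp}$ none of whose members lies in any $H_{v_j}\cap e_1^{\perp}$. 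Then $(e_1,\dots,e_n)$ is an orthonormal basis of $\mathbb{C}^n$: for $i\geq 2$, since $e_i\in e_1^{\perp}$, membership $e_i\in H_{v_j}$ would give $e_i\in H_{v_j}\cap e_1^{\perp}$, which is excluded; and $e_1\notin H_{v_j}$ by construction. This completes the induction. I do not expect any real obstacle; the only subtle point is the need for the auxiliary lines $L_j$, as explained above. (Alternatively, one could argue on the unitary group: each condition $v_j\cdot e_i\neq 0$ defines the complement of a proper real-analytic subset of the connected compact manifold $U(n)$, and a finite union of such nowhere dense sets cannot exhaust $U(n)$; but the inductive argument above is more elementary and self-contained.)
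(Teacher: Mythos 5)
Your proof is correct and follows essentially the same route as the paper: the paper also inducts on $n$, choosing $e_1$ outside $\bigcup_i H_{v_i}\cup\bigcup_i \mathbb{C}v_i$ (your auxiliary lines $L_j=H_{v_j}^{\perp}$ in the paper's notation) precisely so that each $H_{v_i}\cap H_{e_1}$ is a proper subspace of $H_{e_1}$, and then applies the inductive hypothesis there. The subtlety you flag is exactly the one the paper's choice of $e_1$ is designed to handle.
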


\begin{proof}
We first take $$e_1 \in \mathbb{C}^n \setminus (\cup_{i=1} ^k H_{v_i} \bigcup \cup_{i=1} ^k \mathbb{C}v_i).$$ Then for every $i$, $H_{v_i} \cap H_{e_1}$ is a hyperplane in $H_{e_1}$. By induction, there is an orthonormal basis $(e_2,...,e_n)$ such that every $$e_i \in H_{e_1}\setminus \bigcup_{i=1} ^k (H_{v_i} \cap H_{e_1}).$$
The vectors $e_1,...,e_n$ give the desired basis.
\end{proof}

\section{Proof of the main result}

\subsection{The local case}
We first adapt the arguments of Timorin \cite{timorinMixedHRR} to give a form of HRR in the linear case. Roughly speaking, Timorin's proof goes by induction:
\begin{itemize}
  \item assume that HRR holds for $\dim \leq n-1$, then it can be used to prove HL for $\dim =n$;
  \item the HL for $\dim =n$ yields HRR for $\dim =n$.
\end{itemize}

\begin{thrm}\label{thrm linear HRR}
Let $\omega\in \Lambda^{1,1} _\mathbb{R} (\mathbb{C}^n)$ be a K\"ahler metric on $\mathbb{C}^n$. Let $p, q$ be arbitrary integers satisfying $0\leq p,q\leq p+q \leq m \leq n$. Assume that $\alpha_1,...,\alpha_{m-p-q+1} \in \Lambda^{1,1} _\mathbb{R} (\mathbb{C}^n)$ are $m$-positive with respect to $\omega$ and semipositive. Denote $$\Omega = \omega^{n-m}\wedge \alpha_1 \wedge...\wedge\alpha_{m-p-q}.$$
Then the HRR holds with respect to $\Omega$, where the primitive space is defined by $\Omega \wedge \alpha_{m-p-q+1}$.
\end{thrm}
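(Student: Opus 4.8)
The plan is to follow Timorin's inductive scheme adapted to the $m$-positive setting, with Lemma~\ref{lem restc} providing the crucial input on restrictions. I would argue by induction on $n$, and within each $n$ run the two-step loop indicated in the excerpt: HRR in dimensions $\leq n-1$ yields HL in dimension $n$, and HL in dimension $n$ yields HRR in dimension $n$. The base case, where $n-m$ and $m-p-q$ are as small as possible (so $\Omega$ collapses to a product of K\"ahler forms or even the trivial class), is essentially the classical or mixed linear HRR, or can be checked directly.

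For the step ``HRR in lower dimension $\Rightarrow$ HL in dimension $n$'': I want to show $\Omega\wedge(\cdot)\colon \Lambda^{p,q}\to\Lambda^{n-q,n-p}$ is injective (equivalently bijective, by symmetry of dimensions). Suppose $\Omega\wedge\Phi=0$ for some $\Phi\in\Lambda^{p,q}$. The idea is to restrict to a suitable hyperplane $H=H_v$ and use the induction hypothesis on $H\cong\mathbb{C}^{n-1}$. One writes $\Omega=\omega^{n-m}\wedge\alpha_1\wedge\cdots\wedge\alpha_{m-p-q}$; on $H_v$ the relevant form is $\omega_{|H}^{n-1-m}\wedge(\alpha_1)_{|H}\wedge\cdots$, and Lemma~\ref{lem restc} guarantees that for $v$ outside the proper subspace $S(\alpha_j)$ (for the single index $j$ where $k=m$ arises) and with no constraint for the others, all restricted forms $(\alpha_i)_{|H_v}$ remain $m$-positive with respect to $\omega_{|H_v}$ and semipositive. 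Combining with Lemma~\ref{lem basis}, one can choose an orthonormal basis $e_1,\dots,e_n$ with every $e_i$ avoiding the bad subspaces, expand $\Phi$ in terms of the coordinate forms, restrict to each $H_{e_i}$, apply the inductive HL on $H_{e_i}$ to conclude $\Phi_{|H_{e_i}}$ lies in the image of $\omega_{|H_{e_i}}\wedge(\cdot)$ or vanishes in the appropriate primitive sense, and then patch these restriction statements together across the full basis to force $\Phi=0$. This patching — showing that control of $\Phi$ on $n$ generic hyperplanes recovers $\Phi$ globally — is the standard but delicate combinatorial heart of Timorin's argument, and I expect adapting the bookkeeping (degrees $p,q$, the shift by $m$, the role of the semipositive but possibly degenerate $\alpha_j$) to be the main obstacle.

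For the step ``HL in dimension $n$ $\Rightarrow$ HRR in dimension $n$'': once HL holds, one gets the Lefschetz-type decomposition $\Lambda^{p,q}=P^{p,q}\oplus\alpha_{m-p-q+1}\wedge\Lambda^{p-1,q-1}$, and the positivity of $Q$ on $P^{p,q}$ is proved by a deformation/connectedness argument: the set of tuples $(\omega,\alpha_1,\dots,\alpha_{m-p-q+1})$ of the required type is connected (it is cut out by open positivity conditions inside a real vector space, and one can path-connect any such tuple to the ``diagonal'' case $\alpha_i=\omega$ through tuples still satisfying $m$-positivity and semipositivity — here $m$-positivity being an open condition and semipositivity a convex one makes the path-connectedness plausible), along such a path the signature of $Q|_{P^{p,q}}$ cannot jump because HL (hence non-degeneracy of $Q$ on $P^{p,q}$) holds throughout, and at the endpoint $Q$ is positive definite by the classical HRR. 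One subtlety: as the excerpt emphasizes, along the path some $\alpha_j$ may genuinely degenerate, so I must ensure the primitive subspace $P^{p,q}$ varies continuously (its dimension is constant by the dimension count in LD, which follows from HL), so that the continuity-of-signature argument is legitimate. Finally HL and LD for dimension $n$ are formal consequences of HRR in dimension $n$, closing the induction; the passage from this linear Theorem~\ref{thrm linear HRR} to the global Theorem~\ref{thrm main result} is then the $dd^c$-method of Dinh--Nguy\^en, which I would invoke separately.
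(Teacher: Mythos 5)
Your overall scheme is the paper's: induction on $n$, with the loop ``HRR in dimension $n-1$ $\Rightarrow$ HL in dimension $n$ $\Rightarrow$ HRR in dimension $n$,'' and your second step (non-degeneracy of $Q$ from HL, constancy of $\dim P^{p,q}$ from LD, then the signature-continuity argument along the deformation $\alpha_i \rightsquigarrow \omega$, which stays inside the class of $m$-positive semipositive forms) is exactly what the paper does. The problem is the first step, which you leave as an acknowledged obstacle and for which the mechanism you sketch is not the one that works. You propose to apply the inductive \emph{HL} on each hyperplane $H_{e_i}$ and then ``patch'' the restriction statements; but HL on $H_{e_i}$ is only an isomorphism statement and gives you no way to conclude that $\Phi_{|H_{e_i}}$ vanishes. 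What the paper uses is the inductive \emph{HRR}: since $\Omega\wedge\Phi=0$ forces $\Phi_{|H}$ to be primitive on every hyperplane $H$ (with $\omega_{|H}$ playing the role of $\alpha_{m-p-q+1}$), the induction hypothesis gives the pointwise sign
\begin{equation*}
c\,\omega^{n-m-1}\wedge\alpha_1\wedge\cdots\wedge\alpha_{m-p-q}\wedge\Phi\wedge\overline{\Phi}\wedge i\,dH\wedge d\overline{H}\;\geq\;0
\end{equation*}
for every admissible $H$. The key trick is then the identity $i\sum_j dH_{e_j}\wedge d\overline{H_{e_j}}=\omega$ for an orthonormal basis $(e_j)$: summing the inequalities over $j$ produces $c\,\Omega\wedge\Phi\wedge\overline{\Phi}\geq 0$, which is $0$ by hypothesis, so each summand vanishes, and positive-definiteness of $Q_{H_{e_j}}$ on primitives (the inductive HRR again) gives $\Phi_{|H_{e_j}}=0$ for all $j$. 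The ``patching'' you defer is then elementary, not delicate: writing $\Phi=\sum\Phi_{IJ}\,dz_I\wedge d\bar z_J$ in coordinates with $H_{e_j}(z)=z_j$, the hypothesis $p+q<m<n$ (the cases $p+q=m$ and $m=n$ being the classical HL) guarantees every monomial omits some index $j$, so $\Phi_{|H_{e_j}}=0$ for all $j$ forces $\Phi=0$.

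Two smaller corrections. First, your parenthetical that only ``the single index $j$ where $k=m$ arises'' imposes a constraint on $v$ is off: Lemma~\ref{lem restc} must be applied to \emph{each} $\alpha_j$, $j=1,\dots,m-p-q$, and $v$ must avoid the union $\bigcup_j S(\alpha_j)$; this is precisely why Lemma~\ref{lem basis} (an orthonormal basis avoiding finitely many hyperplanes) is needed, since the summation identity above requires the $e_j$ to be orthonormal, not merely generic. Second, the base cases are not ``$\Omega$ collapses to the trivial class'' but rather $p+q=m$ or $m=n$, where the statement reduces to the classical (mixed) linear HL/HRR.
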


In the linear case, the quadratic form $Q$ is defined by
$Q(\Phi, \Psi) = \Omega \wedge \Phi \wedge \overline{\Psi}/\Gamma$,
where $\Gamma$ is a fixed volume form of $\mathbb{C}^n$.
Note that when $n=m$, Theorem \ref{thrm linear HRR} is exactly the linear version of mixed HRR recalled in the introduction.

\begin{rmk}
It is clear that the linear version of HRR is equivalent to the HRR on a compact complex torus.
\end{rmk}

We denote the space of complex $(p,q)$ forms on $\mathbb{C}^n$ with constant coefficients by $\Lambda^{p,q}$.

As stated above, Theorem \ref{thrm linear HRR} will be proved by induction on dimensions.

\begin{lem}\label{lem HL}
Assume Theorem \ref{thrm linear HRR} holds for $\dim = n-1$.
Then the HL holds for $\dim =n$, i.e., $\Omega: \Lambda^{p,q} \rightarrow \Lambda^{n-q, n-p}$ is an isomorphism.
\end{lem}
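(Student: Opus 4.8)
The strategy is the standard Timorin-type reduction of Hard Lefschetz to the inductive hypothesis on hyperplanes. Since $\dim \Lambda^{p,q} = \binom{n}{p}\binom{n}{q} = \binom{n}{n-q}\binom{n}{n-p} = \dim \Lambda^{n-q,n-p}$, it suffices to show that the linear map $\Omega : \Lambda^{p,q} \to \Lambda^{n-q,n-p}$, $\Phi \mapsto \Omega \wedge \Phi$, is injective. So suppose $\Phi \in \Lambda^{p,q}$ satisfies $\Omega \wedge \Phi = 0$; we want $\Phi = 0$. The idea is to restrict to hyperplanes: for a hyperplane $H = H_v$ with equation $v \cdot z$, one has $\Omega \wedge \Phi = 0$ implies in particular $\Omega \wedge \Phi \wedge i\, dH_v \wedge d\overline{H_v} = 0$, and by the restriction identity (\ref{eq restc2}) this says $\Omega_{|H_v} \wedge \Phi_{|H_v} = 0$ on $H_v$, where $\Omega_{|H_v} = \omega_{|H_v}^{n-m} \wedge \alpha_{1|H_v} \wedge \cdots \wedge \alpha_{(m-p-q)|H_v}$.

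The key point is that for suitably chosen $H_v$, the restricted data satisfies the hypotheses of Theorem \ref{thrm linear HRR} in dimension $n-1$: $\omega_{|H_v}$ is again K\"ahler, and by Lemma \ref{lem restc} the forms $\alpha_{j|H_v}$ are $m$-positive with respect to $\omega_{|H_v}$ — for the indices $j \le m-p-q$ we only need the first bullet of Lemma \ref{lem restc} (which gives $\alpha_{j|H_v}^k \wedge \omega_{|H_v}^{n-k-1} > 0$ for $k \le m-1$, i.e.\ exactly $(m)$-positivity on $H_v$ relative to $\omega_{|H_v}$, noting $\dim H_v = n-1$), while for the single index $j = m-p-q+1$ entering the primitivity condition we invoke the second bullet, which requires avoiding the proper subspace $S(\alpha_{m-p-q+1})$; the $\alpha_{j|H_v}$ remain semipositive automatically. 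Here $p+q \le m \le n-1$ still holds after restriction, so the inductive Theorem \ref{thrm linear HRR} applies and in particular yields HL in dimension $n-1$ with respect to $\Omega_{|H_v}$; hence $\Omega_{|H_v} \wedge \Phi_{|H_v} = 0$ forces $\Phi_{|H_v} = 0$ on $H_v$.

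Finally I would upgrade ``$\Phi_{|H_v} = 0$ for many $H_v$'' to ``$\Phi = 0$''. Using Lemma \ref{lem basis}, choose an orthonormal basis $(e_1,\dots,e_n)$ of $\mathbb{C}^n$ all of whose members lie outside the finitely many bad hyperplanes and outside $S(\alpha_{m-p-q+1})$ (more precisely, one chooses the basis so that each coordinate hyperplane $H_{e_i}$ is a ``good'' hyperplane in the sense above). A constant-coefficient form $\Phi$ that restricts to zero on every coordinate hyperplane $H_{e_1},\dots,H_{e_n}$ of such a basis must vanish: writing $\Phi$ in the dual basis, $\Phi_{|H_{e_i}} = 0$ kills every monomial not involving $d e_i$ or $d\bar e_i$, and ranging over all $i$ kills every monomial, so $\Phi = 0$. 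I expect the main obstacle to be the bookkeeping needed to verify that the restricted tuple $(\omega_{|H_v}; \alpha_{1|H_v},\dots,\alpha_{(m-p-q+1)|H_v})$ genuinely satisfies \emph{all} hypotheses of Theorem \ref{thrm linear HRR} in dimension $n-1$ — in particular getting the positivity of the last form $\alpha_{m-p-q+1|H_v}$ right (this is where the semipositivity assumption and the exceptional set $S(\alpha_{m-p-q+1})$ from Lemma \ref{lem restc} are essential), and arranging in Lemma \ref{lem basis} that the chosen basis avoids all the relevant bad loci simultaneously.
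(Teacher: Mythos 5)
Your overall strategy (restrict to hyperplanes, invoke the inductive hypothesis, pick a good orthonormal basis via Lemma \ref{lem basis}, finish with the combinatorial observation that a constant form vanishing on all coordinate hyperplanes is zero) matches the paper's, and your final step and the reduction to injectivity are fine. But the central deduction has a genuine gap. You claim that $\Omega\wedge\Phi=0$ restricts to $\Omega_{|H_v}\wedge\Phi_{|H_v}=0$ and that ``HL in dimension $n-1$ with respect to $\Omega_{|H_v}$'' then forces $\Phi_{|H_v}=0$. This cannot work: $\Omega_{|H_v}$ has bidegree $(n-p-q,\,n-p-q)$, which is one degree too high to be a Lefschetz operator for bidegree $(p,q)$ on the $(n-1)$-dimensional space $H_v$; indeed $\dim\Lambda^{p,q}(H_v)=\binom{n-1}{p}\binom{n-1}{q}$ while $\dim\Lambda^{n-q,n-p}(H_v)=\binom{n-1}{p-1}\binom{n-1}{q-1}$, so the map $\Phi_{|H_v}\mapsto\Omega_{|H_v}\wedge\Phi_{|H_v}$ has a large kernel for $p+q<n$. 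What the restricted equation actually says is that $\Phi_{|H_v}$ is \emph{primitive} on $H_v$ with respect to $\Omega'_{H_v}\wedge\omega_{|H_v}$, where $\Omega'_{H_v}=\omega_{|H_v}^{n-m-1}\wedge\alpha_{1|H_v}\wedge\cdots\wedge\alpha_{(m-p-q)|H_v}$ and the $(m-p-q+1)$-st polarization is taken to be $\omega_{|H_v}$ itself. The paper then uses the full inductive HRR (not HL) to get $Q_{H_v}(\Phi_{|H_v},\Phi_{|H_v})\geq 0$, sums these inequalities over the orthonormal basis using $i\sum_j dH_{e_j}\wedge d\overline{H_{e_j}}=\omega$ to recover $c\,\Omega\wedge\Phi\wedge\overline{\Phi}\geq 0$, and only then uses $\Omega\wedge\Phi=0$ to force each $Q_{H_{e_j}}(\Phi_{|H_{e_j}},\Phi_{|H_{e_j}})=0$, whence $\Phi_{|H_{e_j}}=0$ by positive definiteness on primitives. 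This averaging step is the missing idea in your proposal.

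A secondary error: you assert that the first bullet of Lemma \ref{lem restc} already gives $m$-positivity of $\alpha_{j|H_v}$ on $H_v$ for $j\leq m-p-q$, reserving the second bullet for $j=m-p-q+1$. In fact the first bullet only yields $\alpha_{j|H_v}^k\wedge\omega_{|H_v}^{(n-1)-k}>0$ for $k\leq m-1$, i.e.\ $(m-1)$-positivity on the $(n-1)$-dimensional $H_v$; the case $k=m$ is exactly what the second bullet (and hence semipositivity and the exceptional sets $S(\alpha_j)$) is for. So one must choose $v\notin\bigcup_{j=1}^{m-p-q}S(\alpha_j)$, while $\alpha_{m-p-q+1}$ plays no role in this lemma at all, since the primitivity direction on $H_v$ is $\omega_{|H_v}$.
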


\begin{proof}
When $p+q =m$ or $m=n$, it is the classical HL recalled in the introduction. Thus we only need to consider the case when $p+q <m<n$.

Without loss of generalities, we can assume $\omega = i \sum_{j=1} ^n dz_j \wedge d\bar z_j$.

We only need to prove that the map defined by $\Omega$ is injective.
Assume that $\Phi \in \Lambda^{p,q}$ satisfies $\Omega \wedge \Phi =0$, then for any hyperplane $H$,
\begin{equation*}
  \omega_{|H}^{n-m-1}\wedge \omega_{|H}\wedge {\alpha_1}_{|H} \wedge...\wedge{\alpha_{m-p-q}}_{|H} \wedge \Phi_{|H}=0.
\end{equation*}
This implies that $\Phi_{|H}$ is primitive on $H$, where $\alpha_{m-p-q+1} = \omega_{|H}$ on $H$.

By Lemma \ref{lem restc}, for every general hyperplane $H=H_v$, where
$$v \in \mathbb{C}^n \setminus \bigcup_{j=1}^{m-p-q} S(\alpha_j),$$
the restrictions ${\alpha_j}_{|H}$ are $m$-positive with respect to $\omega_{|H}$. The restrictions ${\alpha_j}_{|H}$ are also semipositive.

Denote $c=i^{q-p} (-1)^{(p+q)(p+q+1)/2}$. By induction, the HRR holds in lower dimensions, thus
\begin{align}\label{eq induct0}
  Q_H (\Phi_{|H}, \Phi_{|H}) &=  c \omega_{|H}^{n-m-1}\wedge {\alpha_1}_{|H} \wedge...\wedge{\alpha_{m-p-q}}_{|H} \wedge \Phi_{|H} \wedge \overline{\Phi_{|H}}
  \geq 0.
\end{align}
By the same argument for (\ref{eq restc}), (\ref{eq induct0}) is equivalent to
\begin{equation}\label{eq induct}
  c\omega^{n-m-1}\wedge \alpha_1 \wedge...\wedge\alpha_{m-p-q}\wedge \Phi \wedge \overline{\Phi} \wedge i dH \wedge d\overline{H} \geq 0.
\end{equation}

Note that every $S(\alpha_j)$ is contained in a hyperplane, by Lemma \ref{lem basis} we can take an orthonormal basis $e_1, ...,e_n$ such that every
$$e_k \in \mathbb{C}^n \setminus \bigcup_{j=1}^{m-p-q} S(\alpha_j).$$
Applying (\ref{eq induct}) to $H_{e_j}$ and taking the sum over $j$ imply
\begin{equation}\label{eq induct1}
  c\omega^{n-m}\wedge \alpha_1 \wedge...\wedge\alpha_{m-p-q}\wedge \Phi \wedge \overline{\Phi} \geq 0,
\end{equation}
by using that
\begin{equation*}
  i \sum_j dH_{e_j} \wedge d\overline{H_{e_j}} = \omega.
\end{equation*}

By the assumption $\Omega \wedge \Phi =0$, (\ref{eq induct1}) is an equality, thus
$$Q_{H_{e_j}} (\Phi_{|H_{e_j}}, \Phi_{|H_{e_j}})=0$$
for every $j$. By induction, this yields that $\Phi_{|H_{e_j}} =0$ for every $j$.

We claim that this implies $\Phi =0$.

Without loss of generalities, we can assume $H_{e_j} (z) = z_j$. The claim can be proved by contradiction. Assume $$\Phi =\sum_{|I| =p, |J|=q} \Phi_{IJ} dz_I \wedge d\bar z_J \neq 0,$$
then there is a term $\Phi_{IJ} dz_I \wedge d\bar z_J \neq 0$. Since $p+q < m < n$, there must exist some $j$ such that the multi-indexes $I, J$ do not contain $j$. This implies $\Phi_{|H_{e_j}} \neq 0$, a contradiction.

This finishes the proof of the lemma.
\end{proof}

\begin{rmk}
Without the assumption on the semipositivity of $\alpha_j$, we associate $\alpha_j$ to the following open set
\begin{equation*}
  P(\alpha_j)=\{v\in \mathbb{C}^n | \alpha_j ^m \wedge \omega ^{n-m-1} \wedge idH_v \wedge d\overline{H_v} >0\}.
\end{equation*}
Then by Lemma \ref{lem restc}, it is clear that ${\alpha_j}_{|H_v}$ is $m$-positive with respect to $\omega_{|H_v}$ on $H_v$ for every $v\in P(\alpha_j)$. It is unclear to us whether the intersection $$\bigcap_{j=1} ^{m-p-q} P(\alpha_j) $$
always contains an othonormal basis. If this was true, then we could apply the induction as above and remove the semipositivity assumption. By Lemma \ref{lem basis}, this holds when the $\alpha_j$ are also semipositive.
\end{rmk}

\begin{lem}\label{nondeg}
In the same setting as Theorem \ref{thrm linear HRR}, assume that the HL holds for $\dim =n$, then the quadratic form $Q$ defined by $\Omega=\omega^{n-m}\wedge \alpha_1 \wedge ...\wedge \alpha_{m-p-q}$ is non-degenerate on $\Lambda^{p,q}$.
\end{lem}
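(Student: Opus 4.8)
The plan is to read off the non-degeneracy of $Q$ from the Hard Lefschetz isomorphism together with Poincar\'e duality at the level of constant-coefficient forms. Recall that for any bidegree $(a,b)$ the wedge product followed by division by the fixed volume form $\Gamma$ identifies $\Lambda^{n-a,n-b}$ with the dual of $\Lambda^{a,b}$; this is a routine linear-algebra fact, proved by writing forms in a coordinate basis. Applying it with $(a,b)=(q,p)$ yields a perfect pairing $\Lambda^{n-q,n-p}\times\Lambda^{q,p}\to\mathbb{C}$, $(\eta,\xi)\mapsto \eta\wedge\xi/\Gamma$.

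First I would unwind the definition of $Q$. Since the constant $c=i^{q-p}(-1)^{(p+q)(p+q+1)/2}$ is nonzero and $\Gamma\neq 0$, the condition that $\Phi\in\Lambda^{p,q}$ lie in the left kernel of $Q$, i.e.\ $Q(\Phi,\Psi)=0$ for every $\Psi\in\Lambda^{p,q}$, is equivalent to $\Omega\wedge\Phi\wedge\overline{\Psi}=0$ in $\Lambda^{n,n}$ for every such $\Psi$. As complex conjugation is a bijection $\Lambda^{p,q}\to\Lambda^{q,p}$, this in turn says $\Omega\wedge\Phi\wedge\xi=0$ for every $\xi\in\Lambda^{q,p}$. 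Now $\Omega\wedge\Phi\in\Lambda^{n-q,n-p}$, so the perfect pairing above forces $\Omega\wedge\Phi=0$.

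It then remains to invoke the hypothesis: by HL for $\dim=n$ the map $\Omega\colon\Lambda^{p,q}\to\Lambda^{n-q,n-p}$, $\Phi\mapsto\Omega\wedge\Phi$, is an isomorphism, in particular injective, so $\Phi=0$. Hence the left kernel of $Q$ is trivial; and since $Q$ is a Hermitian form on the finite-dimensional space $\Lambda^{p,q}$ (with the standard normalization of $c$ one checks $Q(\Psi,\Phi)=\overline{Q(\Phi,\Psi)}$, using $\bar c/c=(-1)^{p+q}$ and that $\Omega,\Gamma$ are real), triviality of the left kernel is exactly non-degeneracy. There is no real obstacle here; the only points deserving a word of care are that the pairing $\Lambda^{n-q,n-p}\times\Lambda^{q,p}\to\mathbb{C}$ is perfect and that $\overline{\Psi}$ exhausts $\Lambda^{q,p}$ as $\Psi$ ranges over $\Lambda^{p,q}$, both of which are immediate once forms are written out in coordinates.
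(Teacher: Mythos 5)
Your proof is correct and follows the same route as the paper, which simply asserts that non-degeneracy follows directly from the Hard Lefschetz isomorphism; you have merely filled in the standard details (the perfect pairing $\Lambda^{n-q,n-p}\times\Lambda^{q,p}\to\mathbb{C}$ given by wedging and dividing by $\Gamma$, and the surjectivity of conjugation onto $\Lambda^{q,p}$) that the paper leaves implicit.
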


\begin{proof}
This follows directly from Lemma \ref{lem HL}.
\end{proof}

Another consequence of the HL in dimension $n$ is the following LD.

\begin{lem}\label{LD}
In the same setting as Theorem \ref{thrm linear HRR}, assume that the HL holds for $\dim =n$, then the space $\Lambda^{p,q}$ has a $Q$-orthogonal direct sum decomposition
\begin{equation*}
 \Lambda^{p,q} = P^{p,q} \oplus (\alpha_{m-p-q+1} \wedge \Lambda^{p-1,q-1}),
\end{equation*}
where we use the convention that $\Lambda^{p-1,q-1} =\{0\}$ when $p=0$ or $q=0$.
Moreover, $$\dim P^{p,q} = \dim \Lambda^{p,q} - \dim \Lambda^{p-1,q-1},$$
which is independent of $\Omega$ and $\alpha_{m-p-q+1}$.
\end{lem}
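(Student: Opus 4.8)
The plan is to deduce the Lefschetz decomposition from the Hard Lefschetz isomorphism (Lemma \ref{lem HL}) by a standard inductive argument on $p+q$, mimicking the classical Lefschetz decomposition but carried out with the operator $\Omega$ and the multiplier $\alpha_{m-p-q+1}$. First I would set $L = \alpha_{m-p-q+1}\wedge(\cdot)\colon \Lambda^{p-1,q-1}\to\Lambda^{p,q}$ and observe that the Hard Lefschetz statement I want to use is the isomorphism $\Omega\wedge(\cdot)\colon \Lambda^{p-1,q-1}\to\Lambda^{n-q+1,n-p+1}$ coming from Lemma \ref{lem HL} applied with the pair $(p-1,q-1)$ (note $\Omega$ is the \emph{same} class for both, and for the $(p-1,q-1)$ level one uses $\Omega\wedge\alpha_{m-p-q+1}$ as the polarizing product, since $m-(p-1)-(q-1)=m-p-q+2$ and $\Omega=\omega^{n-m}\wedge\alpha_1\wedge\cdots\wedge\alpha_{m-p-q}$ still has the right number of factors). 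The key point is that $L$ is injective: if $\alpha_{m-p-q+1}\wedge\Phi=0$ for $\Phi\in\Lambda^{p-1,q-1}$, then $\Omega\wedge\alpha_{m-p-q+1}\wedge\Phi=0$, and Hard Lefschetz at level $(p-1,q-1)$ forces $\Phi=0$. Hence $\dim(\alpha_{m-p-q+1}\wedge\Lambda^{p-1,q-1})=\dim\Lambda^{p-1,q-1}$.

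Next I would show that $P^{p,q}$ and $\alpha_{m-p-q+1}\wedge\Lambda^{p-1,q-1}$ are $Q$-orthogonal and intersect trivially. Orthogonality is a direct computation: for $\Phi\in P^{p,q}$ and $\Psi=\alpha_{m-p-q+1}\wedge\Theta$ with $\Theta\in\Lambda^{p-1,q-1}$, up to the constant $c$ we have $\Omega\wedge\Phi\wedge\overline{\Psi}=\pm\Omega\wedge\alpha_{m-p-q+1}\wedge\Phi\wedge\overline{\Theta}=0$ since $\Omega\wedge\alpha_{m-p-q+1}\wedge\Phi=0$ by definition of the primitive subspace; one must keep track of the sign/bar carefully but this is routine. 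For the trivial intersection, if $\Phi=\alpha_{m-p-q+1}\wedge\Theta$ lies in $P^{p,q}$, then $\Omega\wedge\alpha_{m-p-q+1}^{2}\wedge\Theta=0$; I would like to conclude $\Theta=0$. Here I would invoke Lemma \ref{nondeg} (non-degeneracy of $Q$ on the relevant spaces, obtained from Hard Lefschetz in dimension $n$) — or, more cleanly, induct: assuming the decomposition already known at level $(p-1,q-1)$, write $\Theta=\Theta_0+\alpha_{m-p-q+1}\wedge\Theta_1$ with $\Theta_0$ primitive, note that $Q_{p-1,q-1}(\Theta_0,\Theta_0)$ is computed by $\pm\,\Omega\wedge\alpha_{m-p-q+1}\wedge\Theta_0\wedge\overline{\Theta_0}$ and vanishing of $\Omega\wedge\alpha_{m-p-q+1}^2\wedge\Theta$ together with induction pins down $\Theta_0=0$, then iterate. (The base case $p+q=m$, or rather the smallest value where $\Lambda^{p-1,q-1}=0$, is trivial.)

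Finally, the dimension count closes everything: since the two subspaces meet trivially and $\dim P^{p,q}\geq \dim\Lambda^{p,q}-\dim\Lambda^{p-1,q-1}$ always holds (as $P^{p,q}$ is the kernel of the single linear map $\Phi\mapsto \Omega\wedge\alpha_{m-p-q+1}\wedge\Phi$, whose target has dimension $\dim\Lambda^{n-q+1,n-p+1}=\dim\Lambda^{p-1,q-1}$ by Poincaré duality), we get $\dim\Lambda^{p,q}\geq \dim P^{p,q}+\dim\Lambda^{p-1,q-1}\geq\dim\Lambda^{p,q}$, forcing equality throughout and hence $\Lambda^{p,q}=P^{p,q}\oplus(\alpha_{m-p-q+1}\wedge\Lambda^{p-1,q-1})$ with $\dim P^{p,q}=\dim\Lambda^{p,q}-\dim\Lambda^{p-1,q-1}$; this last number depends only on $p,q,n$, proving the claimed independence of $\Omega$ and $\alpha_{m-p-q+1}$. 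The main obstacle I anticipate is the trivial-intersection step — making sure the injectivity of $L$ together with the kernel description genuinely suffices without circularity; the cleanest route is to feed in Lemma \ref{nondeg} directly rather than re-running an induction, since non-degeneracy of $Q$ on $\Lambda^{p,q}$ immediately gives that $L\Lambda^{p-1,q-1}\cap (L\Lambda^{p-1,q-1})^{\perp_Q}=0$ and $P^{p,q}=(L\Lambda^{p-1,q-1})^{\perp_Q}$ up to the dimension check above.
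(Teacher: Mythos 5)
Your skeleton is the paper's proof: show $L=\alpha_{m-p-q+1}\wedge(\cdot)$ is injective on $\Lambda^{p-1,q-1}$, show $P^{p,q}\cap L\Lambda^{p-1,q-1}=\{0\}$, close with a dimension count, and get orthogonality directly from the definition of $P^{p,q}$. The place where you go astray is the trivial-intersection step. You correctly reduce it to: $\Omega\wedge\alpha_{m-p-q+1}^{2}\wedge\Theta=0$ implies $\Theta=0$. But neither of the two routes you then propose works as described. Non-degeneracy of $Q$ on the ambient space $\Lambda^{p,q}$ (Lemma \ref{nondeg}) does \emph{not} imply $W\cap W^{\perp_Q}=\{0\}$ for a subspace $W$ --- a non-degenerate form can vanish identically on a subspace (an isotropic line in a hyperbolic plane), so the closing claim of your last paragraph is false as a general principle. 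The inductive route needs positive definiteness of $Q_{p-1,q-1}$ on primitive forms to get $\Theta_0=0$ from $Q_{p-1,q-1}(\Theta_0,\Theta_0)=0$; that is the HRR in dimension $n$, which is exactly what is still being proved at this stage (it only arrives at the end of the proof of Theorem \ref{thrm linear HRR}, via the homotopy argument), so this is circular. Only HL in dimension $n$ is available here.

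The correct tool is the one you half-wrote at the outset: the hypothesis ``HL holds for $\dim=n$'' applied at bidegree $(p-1,q-1)$ says that $\Omega\wedge\alpha_{m-p-q+1}^{2}:\Lambda^{p-1,q-1}\to\Lambda^{n-q+1,n-p+1}$ is an isomorphism. Note the square: the Lefschetz operator for $(p-1,q-1)$ must have bidegree $(n-p-q+2,n-p-q+2)$, so $\alpha_{m-p-q+1}$ is repeated twice; in several places you use $\Omega\wedge\alpha_{m-p-q+1}$ or even $\Omega$ alone for this map, which has the wrong degree (and $\Omega\wedge\alpha_{m-p-q+1}$ maps $\Lambda^{p-1,q-1}$ into $\Lambda^{n-q,n-p}$, a space of different dimension, so it cannot be the HL isomorphism). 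With this isomorphism in hand, both the injectivity of $L$ and the trivial intersection are one-line consequences, and your final dimension count --- which is correct and is also how the paper concludes --- finishes the proof.
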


\begin{proof}
By the assumption, the map $\Omega \wedge \alpha_{m-p-q+1} ^2 : \Lambda^{p-1,q-1} \rightarrow \Lambda^{n-q+1,n-p+1}$ is an isomorphism. Thus,
the map $\alpha_{m-p-q+1}: \Lambda^{p-1,q-1} \rightarrow \Lambda^{p,q}$ is injective, and
\begin{equation*}
P^{p,q} \cap (\alpha_{m-p-q+1} \wedge \Lambda^{p-1,q-1}) =\{0\}.
\end{equation*}
We also get that
$$\Omega \wedge \alpha_{m-p-q+1}: \Lambda^{p,q} \rightarrow \Lambda^{n-q+1,n-p+1}$$
is an isomorphism when restricted to $\alpha_{m-p-q+1} \wedge \Lambda^{p-1,q-1}$.
The kernel of $\Omega \wedge \alpha_{m-p-q+1}$ is exactly given by $P^{p,q}$. On the hand, note that
\begin{equation*}
  \dim (\alpha_{m-p-q+1} \wedge \Lambda^{p-1,q-1}) = \dim \Lambda^{p-1,q-1} =\dim \Lambda^{n-q+1,n-p+1}.
\end{equation*}
Thus, $\dim \Lambda^{p,q} = \dim P^{p,q} + \dim \alpha_{m-p-q+1} \wedge \Lambda^{p-1,q-1}$.

The orthogonality follows directly from the definition of $P^{p,q}$.
\end{proof}

\begin{proof}[Proof of Theorem \ref{thrm linear HRR}]
Assume that the HRR holds in $\dim = n-1$, then by Lemma \ref{lem HL} the HL holds in $\dim =n$. Thus we could apply Lemmas \ref{nondeg} and \ref{LD}. Theorem \ref{thrm linear HRR} will follow from a homotopy argument as in \cite{timorinMixedHRR}.
Consider the deformation
\begin{equation*}
  \Omega_t = \omega^{n-m} \wedge ((1-t)\alpha_1 + t \omega)\wedge...\wedge ((1-t)\alpha_{m-p-q} + t \omega),\ 0\leq t \leq 1.
\end{equation*}
The primitive space $P_t ^{p,q}$ is defined by $\Omega_t \wedge ((1-t)\alpha_{m-p-q+1} + t \omega)$. Note that every $(1-t)\alpha_1 + t \omega$ is $m$-positive and semipositive, thus all the quadratic forms $Q_t$ are non-degenerate and all the primitive subspaces $P_t ^{p,q}$ have the same dimension. When $t=1$, $Q_1$ is positive definite by the classical HRR. Thus $Q_0$ is also positive definite, since $Q_t$ never becomes degenerate in the course of deformation.

Therefore, the HRR holds in $\dim = n$.
This finishes the proof.
\end{proof}

The following local estimate is important when reducing the global case to the local case. 

Recall that the space $\Lambda^{p,q}$ admits an inner product defined by
\begin{equation*}
  \langle \Phi, \Psi \rangle = \sum_{I,J} \Phi_{IJ}\overline{\Psi_{IJ}},
\end{equation*}
where $\Phi = \sum_{I, J} \Phi_{IJ} dz_I \wedge d\bar z_J, \Psi = \sum_{I, J} \Psi_{IJ} dz_I \wedge d\bar z_J$. The norm is given by $||\Phi||^2 = \langle \Phi, \Phi \rangle$.

\begin{lem}\label{lem local estimate}
Using the same notations as Theorem \ref{thrm linear HRR}, there are positive constants $c_1, c_2$ such that
\begin{equation*}
  ||\Phi||^2 \leq c_1 Q(\Phi, \Phi) + c_2 ||\Omega\wedge \alpha_{m-p-q+1} \wedge \Phi||^2,\ \forall \Phi \in \Lambda^{p,q}.
\end{equation*}
\end{lem}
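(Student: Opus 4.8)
The plan is to deduce Lemma \ref{lem local estimate} from the linear HRR (Theorem \ref{thrm linear HRR}) together with the Lefschetz decomposition (Lemma \ref{LD}), by a compactness argument on the unit sphere of $\Lambda^{p,q}$. First I would use Lemma \ref{LD} to write any $\Phi \in \Lambda^{p,q}$ uniquely as $\Phi = \Phi_0 + \alpha_{m-p-q+1}\wedge \Psi$ with $\Phi_0 \in P^{p,q}$ and $\Psi \in \Lambda^{p-1,q-1}$ (with $\Psi = 0$ when $p=0$ or $q=0$). Since this direct sum decomposition is a fixed linear isomorphism $\Lambda^{p,q} \cong P^{p,q} \oplus \Lambda^{p-1,q-1}$, there are constants comparing $\|\Phi\|^2$ with $\|\Phi_0\|^2 + \|\Psi\|^2$ in both directions.

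The second step is to control each of the two pieces. For the primitive part, Theorem \ref{thrm linear HRR} says $Q$ is positive definite on $P^{p,q}$, so by finite-dimensionality there is $c>0$ with $\|\Phi_0\|^2 \leq c\, Q(\Phi_0,\Phi_0)$. For the non-primitive part, one first checks $Q(\Phi_0, \alpha_{m-p-q+1}\wedge\Psi) = 0$ (this is the orthogonality in Lemma \ref{LD}, which follows because $\Phi_0$ is primitive and one can move an $\alpha_{m-p-q+1}$ onto $\Phi_0$ inside the top-degree wedge), so $Q(\Phi,\Phi) = Q(\Phi_0,\Phi_0) + Q(\alpha_{m-p-q+1}\wedge\Psi, \alpha_{m-p-q+1}\wedge\Psi)$; in particular $Q(\Phi_0,\Phi_0) \le Q(\Phi,\Phi) + C\|\Psi\|^2$. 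To recover $\|\Psi\|^2$, note that by the HL part (Lemma \ref{lem HL}), the map $\Psi \mapsto \Omega\wedge\alpha_{m-p-q+1}^2\wedge\Psi$ is an isomorphism from $\Lambda^{p-1,q-1}$ onto its image, hence bounded below: $\|\Psi\|^2 \le c' \|\Omega\wedge\alpha_{m-p-q+1}^2\wedge\Psi\|^2$. But $\Omega\wedge\alpha_{m-p-q+1}\wedge\Phi = \Omega\wedge\alpha_{m-p-q+1}\wedge\Phi_0 + \Omega\wedge\alpha_{m-p-q+1}^2\wedge\Psi = \Omega\wedge\alpha_{m-p-q+1}^2\wedge\Psi$ since $\Phi_0$ is primitive, so $\|\Psi\|^2 \le c' \|\Omega\wedge\alpha_{m-p-q+1}\wedge\Phi\|^2$.

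Combining these gives
\begin{align*}
\|\Phi\|^2 &\le C_1(\|\Phi_0\|^2 + \|\Psi\|^2) \le C_2\big(Q(\Phi_0,\Phi_0) + \|\Omega\wedge\alpha_{m-p-q+1}\wedge\Phi\|^2\big)\\
&\le C_3\big(Q(\Phi,\Phi) + \|\Omega\wedge\alpha_{m-p-q+1}\wedge\Phi\|^2\big),
\end{align*}
which is the assertion with $c_1 = c_2 = C_3$; alternatively one can simply observe that both sides are quadratic forms on the finite-dimensional space $\Lambda^{p,q}$, the right-hand side vanishes only when $\Phi$ is primitive and $Q(\Phi,\Phi)=0$, i.e. only when $\Phi = 0$, so the right-hand side is positive definite and dominates $\|\cdot\|^2$ after scaling — this is a cleaner packaging of the same compactness argument.

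The only subtle point — and the one I would spend care on — is the orthogonality $Q(\Phi_0, \alpha_{m-p-q+1}\wedge\Psi)=0$ and the identification $\Omega\wedge\alpha_{m-p-q+1}\wedge\Phi_0 = 0$; both rest on being able to absorb one factor of $\alpha_{m-p-q+1}$ from the non-primitive slot onto $\Phi_0$ inside a top-degree form and invoking the definition of $P^{p,q}$, which is legitimate since wedge product of constant-coefficient forms is graded-commutative and the relevant top-degree form is a number times the volume form $\Gamma$. Everything else is elementary linear algebra on finite-dimensional spaces, so I do not expect a genuine obstacle; the argument is essentially the standard derivation of such \emph{a priori} estimates from HRR plus Lefschetz decomposition.
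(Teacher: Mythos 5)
Your main argument is correct and is essentially the proof the paper invokes: the paper's own proof of this lemma is just a citation to \cite[Proposition 2.2]{DN06} (see also \cite[Proposition 2.8]{DN13HRR}), and that proof runs exactly through your steps --- the Lefschetz decomposition $\Phi = \Phi_0 + \alpha_{m-p-q+1}\wedge\Psi$ from Lemma \ref{LD}, positive definiteness of $Q$ on $P^{p,q}$ from Theorem \ref{thrm linear HRR}, $Q$-orthogonality of the two summands, and recovery of $\Psi$ from $\Omega\wedge\alpha_{m-p-q+1}\wedge\Phi = \Omega\wedge\alpha_{m-p-q+1}^{2}\wedge\Psi$ via the Hard Lefschetz isomorphism on $\Lambda^{p-1,q-1}$. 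One caution about your closing ``cleaner packaging'': since $Q(\Phi,\Phi)$ can be negative off the primitive subspace, $c_1 Q(\Phi,\Phi)+c_2\|\Omega\wedge\alpha_{m-p-q+1}\wedge\Phi\|^2$ is not positive semidefinite for arbitrary $c_1,c_2>0$ and its vanishing does not force both terms to vanish, so that shortcut requires the careful, asymmetric choice of constants that your first, detailed argument already supplies --- keep the detailed version.
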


\begin{proof}
By Theorem \ref{thrm linear HRR} and Lemma \ref{LD}, the proof is the same as that of \cite[Proposition 2.2]{DN06} (see also \cite[Proposition 2.8]{DN13HRR}).
\end{proof}

\subsection{The global case}
Following \cite{DN06, DN13HRR}, the global case (Theorem \ref{thrm main result}) can be reduced to the local case (Theorem \ref{thrm linear HRR}) by solving a $dd^c$-equation.

We denote the space of smooth complex $(p,q)$ forms on $X$ by $\Lambda^{p,q}(X, \mathbb{C})$.

\begin{lem}\label{ddc eq}
In the same setting as Theorem \ref{thrm main result}, use the notations $\widehat{\omega}$ and $\widehat{\alpha}_1,...,\widehat{\alpha}_{m-p-q+1}$ to denote a K\"ahler metric and smooth $m$-positive and semipositive forms in the corresponding classes, then for any smooth form $\widehat{\Phi} \in \Lambda^{p, q}(X, \mathbb{C})$ such that its class $\{\widehat{\Phi}\}\in P^{p, q} (X, \mathbb{C})$, there is a smooth form $\widehat{F}$ such that
\begin{equation}\label{eq primitive}
  \widehat{\Omega} \wedge \widehat{\alpha}_{m-p-q+1}\wedge dd^c \widehat{F} =  \widehat{\Omega} \wedge \widehat{\alpha}_{m-p-q+1}\wedge \widehat{\Phi}.
\end{equation}
\end{lem}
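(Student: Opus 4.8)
The plan is to solve the $dd^c$-equation by a standard Hodge-theoretic argument, reducing the existence of $\widehat F$ to a cohomological vanishing that follows from the hypothesis $\{\widehat\Phi\}\in P^{p,q}(X,\mathbb C)$. Fix the K\"ahler metric $\widehat\omega$ together with its Hodge star $*$, the adjoint $d^{c*}$, and the associated Laplacian $\Delta$, so that $\Lambda^{p,q}(X,\mathbb C)$ decomposes orthogonally as $\mathcal H^{p,q}\oplus \operatorname{Image}(dd^c)\oplus \operatorname{Image}((dd^c)^*)$ (the $dd^c$-Hodge decomposition valid on compact K\"ahler manifolds). Write $\Psi:=\widehat\Omega\wedge\widehat\alpha_{m-p-q+1}\wedge\widehat\Phi$, a smooth $(n-q+1,n-p+1)$-form; I want to find $\widehat F$ of type $(p,q)$ with $dd^c\widehat F$ equal to $\widehat\Phi$ modulo the subspace $\{\widehat\Omega\wedge\widehat\alpha_{m-p-q+1}\wedge\eta=0\}$, i.e. I want $\widehat\Omega\wedge\widehat\alpha_{m-p-q+1}\wedge(\widehat\Phi-dd^c\widehat F)=0$.

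The main step I would carry out is to transfer the problem, via the wedge operator $L:=\widehat\Omega\wedge\widehat\alpha_{m-p-q+1}\wedge(\cdot)\colon \Lambda^{p,q}(X,\mathbb C)\to\Lambda^{n-q+1,n-p+1}(X,\mathbb C)$, into the target space and solve a $dd^c$-equation there. Concretely: the class $\{\Psi\}\in H^{n-q+1,n-p+1}(X,\mathbb C)$ vanishes, precisely because $\{\widehat\Phi\}\in P^{p,q}(X,\mathbb C)$ means $\Omega\cdot\alpha_{m-p-q+1}\cdot\{\widehat\Phi\}=0$ in cohomology. Hence by the $dd^c$-lemma there exists a smooth $(n-q,n-p)$-form $\widehat G$ with $dd^c\widehat G=\Psi$. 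The issue is that $\widehat G$ need not be of the form $L(\widehat F)$ for a $(p,q)$-form $\widehat F$, so I cannot directly read off $\widehat F$. This is the point where one uses the pointwise linear algebra from the local theory: by Lemma~\ref{LD} the operator $L$, and more relevantly the operator $\widehat\Omega\wedge\widehat\alpha_{m-p-q+1}^2\wedge(\cdot)$ at each point, is injective on $\Lambda^{p-1,q-1}$ and $\widehat\Omega\wedge\widehat\alpha_{m-p-q+1}\wedge(\cdot)$ restricted to $\widehat\alpha_{m-p-q+1}\wedge\Lambda^{p-1,q-1}$ is an isomorphism onto its image, so one can control the ``extra'' part of $\widehat G$.

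The cleanest route, which is the one used in \cite{DN06,DN13HRR}, is to pass through a suitable Laplacian adapted to $Q$ and run an orthogonal-projection argument rather than to invert $L$ by hand. I would proceed as follows. Using the inner product on forms, write $\widehat\Phi=h+dd^c u+(dd^c)^*w$ via the $dd^c$-Hodge decomposition; the harmonic part $h$ represents $\{\widehat\Phi\}$. Since $\{\widehat\Phi\}\in P^{p,q}(X,\mathbb C)$, the class $\{\Psi\}=L_*\{h\}$ vanishes in $H^{n-q+1,n-p+1}(X,\mathbb C)$, and since $L_*$ on cohomology is injective on a complement of $P^{p,q}$ while $\{h\}\in P^{p,q}$, this is automatically consistent. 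I then seek $\widehat F$ so that $\widehat\Phi-dd^c\widehat F$ is $L$-primitive, i.e. lands in $\ker L$. Decompose $\widehat\Phi$ pointwise along the bundle decomposition $\Lambda^{p,q}=P^{p,q}\oplus(\widehat\alpha_{m-p-q+1}\wedge\Lambda^{p-1,q-1})$ of Lemma~\ref{LD}: the component in $\widehat\alpha_{m-p-q+1}\wedge\Lambda^{p-1,q-1}$ is $\widehat\alpha_{m-p-q+1}\wedge\widehat\psi$ for a smooth $(p-1,q-1)$-form $\widehat\psi$ determined pointwise (here I use that the bundle map is a pointwise isomorphism, hence $\widehat\psi$ is smooth). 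Now it suffices to kill this component by a $dd^c$-exact form. Equivalently, I must solve $\widehat\Omega\wedge\widehat\alpha_{m-p-q+1}\wedge dd^c\widehat F=\widehat\Omega\wedge\widehat\alpha_{m-p-q+1}^2\wedge\widehat\psi$, and since $\{\widehat\alpha_{m-p-q+1}\wedge\widehat\psi\}$ has the same $L_*$-image as the vanishing class $\{\widehat\Phi\}-\{(\text{primitive part})\}$... the upshot is that the cohomology class of the right-hand side $(n-q+1,n-p+1)$-form is $dd^c$-exact, so the $dd^c$-lemma produces $\widehat G$ with $dd^c\widehat G$ equal to it; one then has to promote $\widehat G$ to the desired shape $L(\widehat F)$ using that $L$ is a bundle isomorphism onto its image restricted to $\widehat\alpha_{m-p-q+1}\wedge\Lambda^{p-1,q-1}$, composing with the (smooth) inverse of that bundle map.

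The hard part, and the only place where real analysis beyond linear algebra enters, is ensuring that after applying the $dd^c$-lemma the primitive $\widehat G$ can be chosen in the image of $L$ so that dividing by $L$ yields a genuine global smooth $(p,q)$-form $\widehat F$ — in other words, that the pointwise splitting of Lemma~\ref{LD} is compatible with the global $dd^c$-lemma. I would handle this exactly as in \cite[Proposition 2.2]{DN06}: invert $L$ on its image bundle to get a global smooth bundle map, apply it to $\widehat G$, and then check that the resulting $\widehat F$ indeed satisfies \eqref{eq primitive} by wedging back with $\widehat\Omega\wedge\widehat\alpha_{m-p-q+1}$ and using the definition of $\widehat\psi$. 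Smoothness of $\widehat F$ is automatic because every operation — the pointwise projection onto $\widehat\alpha_{m-p-q+1}\wedge\Lambda^{p-1,q-1}$, the inverse bundle map, and the solution operator of the $dd^c$-equation (which is smoothing by elliptic regularity) — preserves smoothness. I expect no essential difficulty beyond bookkeeping once Lemma~\ref{LD} and the $dd^c$-lemma are in place; the argument is a verbatim adaptation of Dinh--Nguy\^en, with the m-positive, semipositive $\widehat\alpha_j$ playing the role their K\"ahler forms play.
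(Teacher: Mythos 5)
There is a genuine gap, and you in fact put your finger on it yourself without closing it. The $dd^c$-lemma produces \emph{some} smooth $(n-q,n-p)$-form $\widehat G$ with $dd^c\widehat G=\widehat\Omega\wedge\widehat\alpha_{m-p-q+1}\wedge\widehat\Phi$, but the image of the bundle map $L=\widehat\Omega\wedge\widehat\alpha_{m-p-q+1}\wedge(\cdot)\colon\Lambda^{p-1,q-1}\to\Lambda^{n-q,n-p}$ is a \emph{proper} subbundle (by Lemma \ref{lem HL} the map $\Omega\wedge\alpha_{m-p-q+1}^2$ is an isomorphism on $\Lambda^{p-1,q-1}$, so $L$ is injective but far from surjective), and there is no reason for $\widehat G$ to take values in that subbundle at any point. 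Your proposed fix --- ``invert $L$ on its image bundle \ldots apply it to $\widehat G$'' --- presupposes $\widehat G\in\operatorname{Image}(L)$, which is exactly what is not known; and the two available repairs both fail: modifying $\widehat G$ by a $dd^c$-closed form does not move it into the subbundle pointwise, while projecting $\widehat G$ fibrewise onto $\operatorname{Image}(L)$ destroys the identity $dd^c\widehat G=\Psi$, because the pointwise projection of Lemma \ref{LD} does not commute with $dd^c$ (the splitting varies with the point through $\widehat\Omega$ and $\widehat\alpha_{m-p-q+1}$). The intermediate reduction via the pointwise component $\widehat\alpha_{m-p-q+1}\wedge\widehat\psi$ is circular for the same reason: it reproduces an equation of exactly the original form. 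A further, smaller issue: the splitting $\Lambda^{p,q}=\mathcal H^{p,q}\oplus\operatorname{Image}(dd^c)\oplus\operatorname{Image}((dd^c)^*)$ you invoke is not the standard Hodge decomposition (the relevant statements are of Bott--Chern/Aeppli type, with a different third summand), though this is not where the argument breaks.

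What is actually needed --- and what the paper's proof, quoting \cite[Propositions 3.1, 3.2]{DN13HRR} and \cite{DN06}, supplies --- is a functional-analytic solution of the equation \eqref{eq primitive} itself, viewed as a degenerate second-order equation for the $(p-1,q-1)$-form $\widehat F$: since the $\widehat\alpha_j$ are only $m$-positive and semipositive, the coefficient form $\widehat\Omega\wedge\widehat\alpha_{m-p-q+1}$ is not strictly positive and standard elliptic theory does not apply directly. Dinh--Nguy\^en's scheme works in a Hilbert-space completion of the space of $dd^c$-exact forms, proves that the image of the relevant operator is closed and identifies its orthogonal complement, and the coercivity driving all of this is precisely the pointwise estimate of Lemma \ref{lem local estimate}, i.e. $\|\Phi\|^2\leq c_1Q(\Phi,\Phi)+c_2\|\Omega\wedge\alpha_{m-p-q+1}\wedge\Phi\|^2$. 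Your proposal never uses this estimate, which is the one genuinely new analytic ingredient the present paper must verify (it is the reason Lemma \ref{lem local estimate} is stated at all); without it the existence of $\widehat F$ is not established. I would recommend replacing the $dd^c$-lemma/bundle-inversion route by a direct citation of the Dinh--Nguy\^en argument with Lemma \ref{lem local estimate} as input, which is what the paper does.
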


\begin{proof}
Using Lemma \ref{lem local estimate}, this follows from \cite[Propositions 3.1, 3.2]{DN13HRR} (see also \cite{DN06}).
\end{proof}

Now we could prove Theorem \ref{thrm main result} by using Lemma \ref{ddc eq}.

\begin{proof}[Proof of Theorem \ref{thrm main result}]
Assume that $\Phi \in P^{p, q} (X, \mathbb{C})$ and let $\widehat{\Phi}$ be a smooth representative of the class $\Phi$. Then by Lemma \ref{ddc eq}, there is a smooth form $\widehat{F}$ such that
\begin{equation}\label{eq local primitive}
  \widehat{\Omega} \wedge \widehat{\alpha}_{m-p-q+1}\wedge (\widehat{\Phi} - dd^c \widehat{F}) =0.
\end{equation}
Thus $\widehat{\Phi} - dd^c \widehat{F}$ is a primitive $(p, q)$ form with respect to $\widehat{\Omega} \wedge \widehat{\alpha}_{m-p-q+1}$.

Applying Theorem \ref{thrm linear HRR}, we have
\begin{equation}\label{eq pt Q}
  c\widehat{\Omega} \wedge  (\widehat{\Phi} - dd^c \widehat{F}) \wedge \overline{(\widehat{\Phi} - dd^c \widehat{F})} \geq 0
\end{equation}
at every point.
By Stokes formula,
\begin{equation}\label{eq Q}
  Q(\Phi, \Phi) = c\int \widehat{\Omega} \wedge  (\widehat{\Phi} - dd^c \widehat{F}) \wedge \overline{(\widehat{\Phi} - dd^c \widehat{F})} \geq 0,
\end{equation}
where $c=i^{q-p} (-1)^{(p+q)(p+q+1)/2}$.

Moreover, if $Q(\Phi, \Phi)=0$, then we have equalities everywhere. In particular, (\ref{eq pt Q}) is an equality at every point. By Theorem \ref{thrm linear HRR} again, this yields
\begin{equation*}
  \widehat{\Phi} - dd^c \widehat{F} =0
\end{equation*}
on $X$. Thus $\Phi =0$ in $H^{p,q} (X, \mathbb{C})$. This finishes the proof of the global HRR.

As an immediate consequence, we get the global HL. We only need to check that $$\Omega: H^{p,q} (X, \mathbb{C})\rightarrow H^{n-q,n-p} (X, \mathbb{C})$$ is injective. Assume that $\Phi \in H^{p,q} (X, \mathbb{C})$ satisfyies $\Omega \cdot \Phi =0$, then $\Phi$ is primitive and $Q(\Phi, \Phi)=0$. By HRR, $\Phi =0$, which finishes the proof of the HL.

Finally, the global LD follows from similar arguments as Lemma \ref{LD}.

This finishes the proof of Theorem \ref{thrm main result}.

\end{proof}

\section{Further remarks}

\subsection{Abstract Hodge-Riemann forms}
In \cite{DN13HRR}, Dinh-Nguy\^{e}n gave an abstract version of HRR on compact K\"ahler manifolds by introducing the notion of Hodge-Riemann forms. By Theorem \ref{thrm main result}, a more general abstract mixed HRR can be established.

In the sequel, the $\alpha_j$ are assumed to have the same positivity as above, i.e., $m$-positivity and semipositivity.

\begin{defn}(analogous to \cite[Definition 2.1]{DN13HRR})
A real $(k,k)$ form $\Omega \in \Lambda ^{k,k}$, $k=n-p-q$, is called a Lefschetz form for the bidegree $(p,q)$ if the map
\begin{equation*}
  \Omega: \Lambda ^{p,q}\rightarrow  \Lambda ^{n-q,n-p},\ \Phi \mapsto \Omega \wedge \Phi
\end{equation*}
is an isomorphism.
Assume $p+q\leq m\leq n$, a real $(k,k)$ form is said to be a Hodge-Riemann form for the bidegree $(p,q)$ if there is a continuous deformation $\Omega_t \in \Lambda  ^{k,k} $ with $0\leq t\leq 1$, $\Omega_0 =\Omega$ and $\Omega_1 = \omega^{n-m}\wedge \alpha_1 \wedge...\wedge \alpha_{m-p-q}$ such that
\begin{equation*}
  \Omega_t \wedge \alpha_{m-p-q+1} ^{2r}\ \textrm{is a Lefschetz form for the bidegree}\ (p-r, q-r)
\end{equation*}
for every $r=0, 1$ and $0\leq t \leq 1$.
\end{defn}

\begin{defn}
Let $X$ be a compact K\"ahler manifold of dimension $n$. Then $\Omega\in H^{k,k} (X, \mathbb{R})$ is called a Hodge-Riemann class, if it has a representative which is a Hodge-Riemann form at every point.
\end{defn}

Analogous to \cite{DN13HRR}, we have:

\begin{thrm}
Let $X$ be a compact K\"ahler manifold of dimension $n$, and let $\Omega$ be a Hodge-Riemann class. Then the HRR holds with respect to $\Omega$, where the primitive space is given by $\Omega \cdot \alpha_{m-p-q+1}$.
\end{thrm}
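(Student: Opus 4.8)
The plan is to reproduce, for a Hodge--Riemann class $\Omega$ for the bidegree $(p,q)$, the two-stage strategy already used for Theorem \ref{thrm main result}: first a pointwise (linear) statement, then globalization by the $dd^c$-method. The only new ingredient is a homotopy argument that propagates positivity from the model form $\omega^{n-m}\wedge\alpha_1\wedge\cdots\wedge\alpha_{m-p-q}$, where Theorem \ref{thrm linear HRR} applies, to an arbitrary Hodge--Riemann form.

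\emph{Linear statement.} First I would prove: if $\Omega\in\Lambda^{k,k}$ is a Hodge--Riemann form for the bidegree $(p,q)$, then the quadratic form $Q$ defined by $\Omega$ is positive definite on $P^{p,q}=\ker\bigl(\Omega\wedge\alpha_{m-p-q+1}\colon\Lambda^{p,q}\to\Lambda^{n-q+1,n-p+1}\bigr)$. Let $\Omega_t$ be a deformation as in the definition, with $\Omega_0=\Omega$, $\Omega_1=\omega^{n-m}\wedge\alpha_1\wedge\cdots\wedge\alpha_{m-p-q}$, and write $Q_t$, $P_t^{p,q}$ for the associated forms and primitive subspaces. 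Since $\Omega_t$ is a Lefschetz form for $(p,q)$, each $Q_t$ is non-degenerate on $\Lambda^{p,q}$; since $\Omega_t\wedge\alpha_{m-p-q+1}^2$ is a Lefschetz form for $(p-1,q-1)$, the proof of Lemma \ref{LD} applies unchanged and gives, for every $t$, a $Q_t$-orthogonal decomposition $\Lambda^{p,q}=P_t^{p,q}\oplus(\alpha_{m-p-q+1}\wedge\Lambda^{p-1,q-1})$ in which $\Omega_t\wedge\alpha_{m-p-q+1}$ has kernel exactly $P_t^{p,q}$ and constant rank $\dim\Lambda^{p-1,q-1}$. Hence $t\mapsto P_t^{p,q}$ is a continuous family of subspaces of fixed dimension, $Q_t$ restricts to a non-degenerate form on each summand, and $Q_t|_{P_t^{p,q}}$ is a continuous family of non-degenerate Hermitian forms, so its signature is locally constant in $t$. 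By Theorem \ref{thrm linear HRR}, $Q_1$ is positive definite on $P_1^{p,q}$; therefore $Q_0=Q$ is positive definite on $P_0^{p,q}=P^{p,q}$ (with the usual conventions when $p=0$ or $q=0$).

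\emph{Globalization.} With the linear statement and the Lefschetz decomposition in hand, the local estimate of Lemma \ref{lem local estimate} holds verbatim with $\Omega$ any Hodge--Riemann form (its proof is linear algebra once HRR and LD are available), and hence the $dd^c$-solvability of Lemma \ref{ddc eq} holds for a representative $\widehat\Omega$ that is a Hodge--Riemann form at every point, by the same appeal to \cite[Propositions 3.1, 3.2]{DN13HRR}. Then, exactly as in the proof of Theorem \ref{thrm main result}: for $\Phi\in P^{p,q}(X,\mathbb{C})$ with smooth representative $\widehat\Phi$, solve $\widehat\Omega\wedge\widehat\alpha_{m-p-q+1}\wedge(\widehat\Phi-dd^c\widehat F)=0$, so that $\widehat\Phi-dd^c\widehat F$ is pointwise primitive; apply the linear statement to get $c\,\widehat\Omega\wedge(\widehat\Phi-dd^c\widehat F)\wedge\overline{(\widehat\Phi-dd^c\widehat F)}\geq0$ at every point; integrate by Stokes' formula to get $Q(\Phi,\Phi)\geq0$, with equality forcing pointwise equality, hence $\widehat\Phi-dd^c\widehat F=0$ and $\Phi=0$ in $H^{p,q}(X,\mathbb{C})$. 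The resulting HL and LD then follow as in the proof of Theorem \ref{thrm main result}.

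\emph{Main obstacle.} The delicate point is the homotopy argument in the linear statement: one must check that non-degeneracy of $Q_t$ together with the persistence of the Lefschetz decomposition really does prevent the signature of $Q_t|_{P_t^{p,q}}$ from jumping, i.e.\ that the primitive subspaces vary continuously with $t$. This is precisely where both Lefschetz conditions ($r=0$ and $r=1$) in the definition of a Hodge--Riemann form are used, and it explains why the definition is formulated as it is. Aside from this, the argument is a transcription of the one in \cite{DN13HRR}, with Theorem \ref{thrm linear HRR} in place of the classical mixed HRR.
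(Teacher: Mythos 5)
Your proposal is correct and follows exactly the route the paper intends: the paper omits the proof with the remark ``Analogous to \cite{DN13HRR}'', and what that analogy entails is precisely your two stages --- the homotopy argument on the deformation $\Omega_t$ (using the $r=0$ Lefschetz condition for non-degeneracy of $Q_t$ and the $r=1$ condition to run Lemma \ref{LD} and keep $P_t^{p,q}$ of constant dimension, so the signature cannot jump), followed by the $dd^c$-globalization via Lemmas \ref{lem local estimate} and \ref{ddc eq}. You have in fact supplied the details the paper leaves implicit, and your identification of the continuity of $t\mapsto P_t^{p,q}$ as the point where both Lefschetz conditions are needed is exactly why the definition of a Hodge--Riemann form is stated with $r=0,1$.
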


\subsection{Generalized semi-small maps}\label{sec semismall}

Let $f: X\rightarrow Y$ be a proper surjective holomorphic map between two complex spaces. For every integer $k$ define
\begin{equation*}
  Y^k =\{y\in Y| \dim f^{-1}(y) = k\}.
\end{equation*}
The spaces $Y^k$ are analytic subvarieties of $Y$, whose disjoint union is $Y$.

Recall that $f$ is called a semi-small map in the sense of Goresky-MacPherson if $$\dim Y^k + 2k \leq \dim X$$ for every $k \leq \dim X /2$. Note that a semi-small map must be generically finite.

Semi-small maps can be generalized as follows.

\begin{defn}
We call a proper surjective holomorphic map $f: X\rightarrow Y$ \emph{relatively semi-small} if $$\dim Y^k + 2k \leq 2\dim X - \dim Y$$
for every $k$. Equivalently, $f$ is relatively semi-small if and only if there are no irreducible analytic subvarieties $T\subset X$ such that $$2 \dim T -2\dim X + \dim Y > \dim f(T).$$
\end{defn}

In particular, when $\dim X =\dim Y$, we get a semi-small map.

In \cite{cataldoMigHLsemismallmap}, a line bundle is called lef if the Kodaira map of its multiple induces a semi-small map. Corresponding to relatively semi-small maps, this can be generalized as follows.

\begin{defn}
A line bundle $L$ on a projective manifold $X$ is called \emph{relatively lef} if $kL$ is generated by its global sections for some positive integer $k$ and the corresponding morphism
\begin{equation*}
  \phi_{|kL|} : X \rightarrow Y = \phi_{|kL|} (X)
\end{equation*}
is a relatively semi-small map.
\end{defn}

Analogous to \cite{cataldoMigHLsemismallmap}, it is easy to get the following result.

\begin{prop}
Let $f: X\rightarrow Y$ be a surjective holomorphic map from a compact K\"ahler manifold of dimension $n$ to a projective variety $Y$ of dimension $m$. Let $\omega$ be a K\"ahler class on $X$, and let $A_1,...,A_{m-p-q}$ be ample line bundles on $Y$.
If the relative HL holds with respect to $$\omega^{n-m}\cdot f^* A_1 \cdot ...\cdot f^* A_{m-p-q}$$ for any $0\leq p,q\leq p+q \leq m$, then $f$ is a relatively semi-small map.
\end{prop}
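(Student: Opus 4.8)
The plan is to prove the contrapositive: if $f$ is not relatively semi-small, then for some bidegree $(p,q)$ with $0\le p,q\le p+q\le m$ the Hard Lefschetz map attached to $\Omega=\omega^{n-m}\cdot f^*A_1\cdots f^*A_{m-p-q}$ fails to be injective. By the equivalent formulation of relative semi-smallness, there is an irreducible analytic subvariety $T\subset X$ with $2\dim T-2\dim X+\dim Y>\dim f(T)$. Set $d=\dim T$ and $Z=f(T)$, which is a closed irreducible analytic (hence projective) subvariety of $Y$ since $f$ is proper; put $e=\dim Z$, so the inequality becomes $2d-2n+m\ge e+1$.

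First I would fix the bidegree $p=q=n-d$ and check that it is admissible. Clearly $p=q=n-d\ge 0$. Since $m\le n$, the inequality $2d\ge 2n-m+1$ gives $2d-n\ge n-m+1\ge 1$, so $n-p-q=2d-n\ge 1$ and $\Omega\in\Lambda^{2d-n,2d-n}$ is a class of positive degree; moreover $m-p-q=2d-2n+m\ge e+1\ge 1$, so $\Omega$ involves at least one pullback factor $f^*A_i$ and $p+q=2(n-d)<m$. (This is precisely where the strict inequality in the failure of relative semi-smallness enters, and is why one gets a \emph{semi-small} --- rather than a weaker --- conclusion.) For this bidegree the relevant Hard Lefschetz map is $\Omega\colon H^{n-d,n-d}(X,\mathbb C)\to H^{d,d}(X,\mathbb C)$.

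The contradiction comes from the cycle class $[T]\in H^{n-d,n-d}(X,\mathbb R)$. Since $\omega$ is K\"ahler, $\int_X[T]\wedge\omega^{d}=\vol_\omega(T)>0$, hence $[T]\ne 0$. On the other hand I claim $\Omega\cdot[T]=0$; it is enough to show $(f^*A_1\cdots f^*A_j)\cdot[T]=0$ with $j=m-p-q$. Choosing a resolution $\nu\colon\widetilde T\to T$ and setting $\mu=\iota_T\circ\nu\colon\widetilde T\to X$, the projection formula gives
\[
  (f^*\beta)\cdot[T]=\mu_*\big((f\circ\mu)^*\beta\big),\qquad \beta:=A_1\cdots A_j\in H^{2j}(Y,\mathbb C).
\]
The morphism $f\circ\mu$ has image contained in $Z$, and $H^{2j}(Z,\mathbb C)=0$ because $Z$ is compact of complex dimension $e<j$; hence $(f\circ\mu)^*\beta=0$ and $\Omega\cdot[T]=\omega^{n-m}\cdot 0=0$.

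Thus $\Omega\colon H^{n-d,n-d}(X,\mathbb C)\to H^{d,d}(X,\mathbb C)$ annihilates the nonzero class $[T]$ and cannot be an isomorphism, contradicting the assumed relative Hard Lefschetz for the bidegree $(n-d,n-d)$. Therefore $f$ is relatively semi-small. The only genuinely delicate point is the numerology that places $(n-d,n-d)$ in the admissible range with at least one pullback factor and with $2d-n\ge 1$; everything else --- the nonvanishing of $[T]$, the projection formula for a possibly singular $T$ via a resolution, and the vanishing $H^{2j}(Z,\mathbb C)=0$ for $j>\dim Z$ --- is routine.
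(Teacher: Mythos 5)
Your proposal is correct and follows essentially the same route as the paper: take the subvariety $T$ witnessing the failure of relative semi-smallness and show that its (nonzero) fundamental class lies in the kernel of the Hard Lefschetz map for the bidegree $(n-\dim T, n-\dim T)$. The only difference is cosmetic: the paper justifies $\Omega\cdot\{T\}=0$ by representing $f^*A_1\cdots f^*A_{m-2(n-\dim T)}$ by an analytic cycle disjoint from $T$ (general members of the linear systems missing $f(T)$), whereas you use the projection formula together with the vanishing $H^{2j}(f(T),\mathbb{C})=0$ for $j>\dim f(T)$; both are valid, and your version of the numerology and the nonvanishing of $\{T\}$ are spelled out more explicitly than in the paper.
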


\begin{proof}
Otherwise, assume that $f$ is not relatively semi-small. Then there is an irreducible analytic subvariety $T\subset X$ such that
\begin{equation*}
  2 \dim T -2n + m > \dim f(T).
\end{equation*}
Let $\{T\} \in H^{n-\dim T, n-\dim T} (X, \mathbb{R})$ be the fundamental class of $T$. The class $f^* A_1 \cdot ...\cdot f^* A_{m-2(n-\dim T)}$ can be represented by an analytic cycle that does not intersect $T$, thus its intersection with $\{T\}$ is zero. In particular,
\begin{equation*}
  \omega^{n-m}\cdot f^* A_1 \cdot ...\cdot f^* A_{m-2(n-\dim T)} \cdot \{T\} =0,
\end{equation*}
which implies that the relative HL does not hold for the bidegree $(n-\dim T, n-\dim T)$.

This finishes the proof.
\end{proof}

\begin{rmk}
In the beautiful paper \cite{cataldoMigHLsemismallmap}, de Cataldo-Migliorini proved that $L$ is lef on a projective manifold of dimension $n$ if and only if the HL holds with respect to $L^{n-p-q}$ for every $0\leq p,q\leq p+q\leq n$. Moreover, they proved that for semi-small maps, the deep Decomposition Theorem of Beilinson, Bernstein, Deligne and Gabber \cite{BBDG82DecompsitionThrm} is equivalent to the non-degeneracy of certain intersection forms (i.e., HRR) associated with a stratification. They applied this result to give a new proof of the Decomposition Theorem for the direct image of the constant sheaf.

In our setting, we expect that $ \phi_{|kL|} : X \rightarrow Y = \phi_{|kL|} (X)$ is relatively semi-small if and only if the relative HL holds with respect to $\omega^{n-m}\cdot L^{m-p-q}$ for every $0\leq p,q\leq p+q \leq m$. We intend to discuss it elsewhere.
Furthermore, we expect that the generalized form of Corollary \ref{cor relative HRR} might be applied to study the topology of these maps.
\end{rmk}

\bibliography{reference}

\providecommand{\bysame}{\leavevmode\hbox to3em{\hrulefill}\thinspace}
\providecommand{\MR}{\relax\ifhmode\unskip\space\fi MR }
% \MRhref is called by the amsart/book/proc definition of \MR.
\providecommand{\MRhref}[2]{%
  \href{http://www.ams.org/mathscinet-getitem?mr=#1}{#2}
}
\providecommand{\href}[2]{#2}
\begin{thebibliography}{McM93}

\bibitem[AHK18]{huhHRR}
Karim Adiprasito, June Huh, and Eric Katz, \emph{Hodge theory for combinatorial
  geometries}, Ann. of Math. (2) \textbf{188} (2018), no.~2, 381--452.
  \MR{3862944}

\bibitem[BBD82]{BBDG82DecompsitionThrm}
A.~A. Be\u{\i}linson, J.~Bernstein, and P.~Deligne, \emph{Faisceaux pervers},
  Analysis and topology on singular spaces, {I} ({L}uminy, 1981),
  Ast\'{e}risque, vol. 100, Soc. Math. France, Paris, 1982, pp.~5--171.
  \MR{751966}

\bibitem[BS02]{berdtsibony02dbarcurrent}
Bo~Berndtsson and Nessim Sibony, \emph{The {$\overline\partial$}-equation on a
  positive current}, Invent. Math. \textbf{147} (2002), no.~2, 371--428.
  \MR{1881924}

\bibitem[Cat08]{cattanimixedHRR}
Eduardo Cattani, \emph{Mixed {L}efschetz theorems and {H}odge-{R}iemann
  bilinear relations}, Int. Math. Res. Not. IMRN (2008), no.~10, Art. ID
  rnn025, 20. \MR{2429243}

\bibitem[dCM02]{cataldoMigHLsemismallmap}
Mark Andrea~A. de~Cataldo and Luca Migliorini, \emph{The hard {L}efschetz
  theorem and the topology of semismall maps}, Ann. Sci. \'{E}cole Norm. Sup.
  (4) \textbf{35} (2002), no.~5, 759--772. \MR{1951443}

\bibitem[Dem12]{Dem_AGbook}
Jean-Pierre Demailly, \emph{Complex analytic and differential geometry. online
  book}, available at www-fourier. ujf-grenoble. fr/~
  demailly/manuscripts/agbook. pdf, Institut Fourier, Grenoble (2012).

\bibitem[DN06]{DN06}
Tien-Cuong Dinh and Vi{\^e}t-Anh Nguy{\^e}n, \emph{The mixed {H}odge-{R}iemann
  bilinear relations for compact {K}\"ahler manifolds}, Geom. Funct. Anal.
  \textbf{16} (2006), no.~4, 838--849.

\bibitem[DN13]{DN13HRR}
Tien-Cuong Dinh and Vi\^{e}t-Anh Nguy\^{e}n, \emph{On the {L}efschetz and
  {H}odge-{R}iemann theorems}, Illinois J. Math. \textbf{57} (2013), no.~1,
  121--144. \MR{3224564}

\bibitem[DS04]{DinhS04aut}
Tien-Cuong Dinh and Nessim Sibony, \emph{Groupes commutatifs d'automorphismes
  d'une vari\'{e}t\'{e} k\"{a}hl\'{e}rienne compacte}, Duke Math. J.
  \textbf{123} (2004), no.~2, 311--328. \MR{2066940}

\bibitem[DS05]{dinhS05greencurrent}
\bysame, \emph{Green currents for holomorphic automorphisms of compact
  {K}\"{a}hler manifolds}, J. Amer. Math. Soc. \textbf{18} (2005), no.~2,
  291--312. \MR{2137979}

\bibitem[EW14]{williamsonHodgeSeorgel}
Ben Elias and Geordie Williamson, \emph{The {H}odge theory of {S}oergel
  bimodules}, Ann. of Math. (2) \textbf{180} (2014), no.~3, 1089--1136.
  \MR{3245013}

\bibitem[Gar59]{gardinghyperbolic}
Lars Garding, \emph{An inequality for hyperbolic polynomials}, J. Math. Mech.
  \textbf{8} (1959), 957--965. \MR{0113978}

\bibitem[Gro90]{gromov1990convex}
Misha Gromov, \emph{Convex sets and {K}\"ahler manifolds}, Advances in
  Differential Geometry and Topology, ed. F. Tricerri, World Scientific,
  Singapore (1990), 1--38.

\bibitem[Hor94]{hormanderConvexity}
Lars Hormander, \emph{Notions of convexity}, Progress in Mathematics, vol. 127,
  Birkh\"{a}user Boston, Inc., Boston, MA, 1994. \MR{1301332}

\bibitem[McM93]{mcmullenSimplePolytopes}
Peter McMullen, \emph{On simple polytopes}, Invent. Math. \textbf{113} (1993),
  no.~2, 419--444. \MR{1228132}

\bibitem[Tim98]{timorinMixedHRR}
V.~A. Timorin, \emph{Mixed {H}odge-{R}iemann bilinear relations in a linear
  context}, Funktsional. Anal. i Prilozhen. \textbf{32} (1998), no.~4, 63--68,
  96. \MR{1678857}

\bibitem[Tim99]{timorinPolytopeHRR}
\bysame, \emph{An analogue of the {H}odge-{R}iemann relations for simple convex
  polyhedra}, Uspekhi Mat. Nauk \textbf{54} (1999), no.~2(326), 113--162.
  \MR{1711255}

\bibitem[Voi07]{voisinHodge1}
Claire Voisin, \emph{Hodge theory and complex algebraic geometry. {I}}, english
  ed., Cambridge Studies in Advanced Mathematics, vol.~76, Cambridge University
  Press, Cambridge, 2007, Translated from the French by Leila Schneps.
  \MR{2451566}

\bibitem[Wil16]{williamson2016hodge}
Geordie Williamson, \emph{The {H}odge theory of the {H}ecke category}, arXiv
  preprint, arXiv:1610.06246 (2016).

\bibitem[Xia18]{xiao18Hodgeindex}
Jian Xiao, \emph{Hodge-index type inequalities, hyperbolic polynomials and
  complex {H}essian equations}, arXiv preprint, arXiv:1810.04662 (2018).

\end{thebibliography}
\bibliographystyle{amsalpha}

\bigskip

\bigskip

\noindent
\textsc{Tsinghua University, Beijing, China}\\
\noindent
\verb"Email: jianxiao@mail.tsinghua.edu.cn"

\end{document}